\documentclass[12pt]{amsart}
\pdfoutput=1

\usepackage[paperheight=11in, paperwidth=8.5in, left=1in, top=1.25in, right=1in, bottom=1.25in]{geometry}
\usepackage[linktocpage=true, linktoc=all, colorlinks=true, linkcolor=black, citecolor=black, filecolor=black, urlcolor=black, pagebackref=false, pdfstartpage={1}, pdfstartview={FitH}, pdftitle={}, pdfauthor={}, pdfsubject={}, pdfcreator={}, pdfproducer={}, pdfkeywords={}]{hyperref}
\usepackage{afterpage}
\usepackage{amsfonts}
\usepackage{amsmath}
\allowdisplaybreaks[4]
\usepackage{amssymb}
\usepackage{amsthm}
\usepackage{array}
\usepackage{arydshln}
\usepackage{bbm}
\usepackage[justification=centering]{caption}
\usepackage[capitalize,nameinlink,noabbrev,compress]{cleveref}

\usepackage{enumerate}
\usepackage{enumitem}
\usepackage{float}
\restylefloat{figure}
\usepackage{graphicx}
\usepackage{kbordermatrix}
\renewcommand{\kbldelim}{(}
\renewcommand{\kbrdelim}{)}
\usepackage{mathrsfs}
\usepackage[framemethod=tikz,ntheorem,xcolor]{mdframed}
\usepackage{multirow}
\usepackage{parcolumns}
\usepackage{scalerel}
\usepackage{tikz}\pdfpageattr{/Group <</S /Transparency /I true /CS /DeviceRGB>>}
\usetikzlibrary{arrows, calc, cd, decorations.markings, intersections, matrix, positioning, through}
\tikzset{commutative diagrams/.cd, every label/.append style = {font = \normalsize}}
\usepackage[all]{xy}
\usepackage[aligntableaux=center]{ytableau}

\numberwithin{equation}{section}
\newtheorem{theorem}{Theorem}
\AfterEndEnvironment{theorem}{\noindent\ignorespaces}
\numberwithin{theorem}{section}

\AfterEndEnvironment{conjecture}{\noindent\ignorespaces}

\AfterEndEnvironment{corollary}{\noindent\ignorespaces}
\newtheorem{lemma}[theorem]{Lemma}
\AfterEndEnvironment{lemma}{\noindent\ignorespaces}

\AfterEndEnvironment{problem}{\noindent\ignorespaces}
\newtheorem{proposition}[theorem]{Proposition}
\AfterEndEnvironment{proposition}{\noindent\ignorespaces}

\theoremstyle{definition}

\AfterEndEnvironment{definition}{\noindent\ignorespaces}
\newtheorem{example_no_qed}[theorem]{Example}
\newenvironment{example}[1][]{\begin{example_no_qed}[#1]\pushQED{\qed}}{\popQED\end{example_no_qed}}
\AfterEndEnvironment{example}{\noindent\ignorespaces}
\newtheorem{remark}[theorem]{Remark}
\AfterEndEnvironment{remark}{\noindent\ignorespaces}

\theoremstyle{remark}

\AfterEndEnvironment{claim}{\noindent\ignorespaces}
\newtheorem*{claim_proof_no_qed}{Proof of Claim}

\AfterEndEnvironment{claim_proof}{\noindent\ignorespaces}

\AddToHook{env/proposition/begin}{\crefalias{theorem}{proposition}}
\AddToHook{env/conjecture/begin}{\crefalias{theorem}{conjecture}}
\AddToHook{env/corollary/begin}{\crefalias{theorem}{corollary}}
\AddToHook{env/lemma/begin}{\crefalias{theorem}{lemma}}
\AddToHook{env/problem/begin}{\crefalias{theorem}{problem}}
\AddToHook{env/proposition/begin}{\crefalias{theorem}{proposition}}
\AddToHook{env/definition/begin}{\crefalias{theorem}{definition}}
\AddToHook{env/example/begin}{\crefalias{theorem}{example}}
\AddToHook{env/example_no_qed/begin}{\crefalias{theorem}{example_no_qed}}
\AddToHook{env/remark/begin}{\crefalias{theorem}{remark}}

\crefname{theorem}{Theorem}{Theorems}
\crefname{conjecture}{Conjecture}{Conjectures}
\crefname{corollary}{Corollary}{Corollaries}
\crefname{lemma}{Lemma}{Lemmas}
\crefname{problem}{Problem}{Problems}
\crefname{proposition}{Proposition}{Propositions}
\crefname{definition}{Definition}{Definitions}
\crefname{example}{Example}{Examples}
\crefname{example_no_qed}{Example}{Examples}
\crefname{remark}{Remark}{Remarks}

\newcommand{\define}[1]{{\bfseries\itshape #1}}

\DeclareMathOperator{\End}{End}
\renewcommand{\eqref}[1]{\hyperref[#1]{\textup{(\ref*{#1})}}}

\DeclareMathOperator{\Hom}{Hom}

\DeclareMathOperator{\interior}{int}

\DeclareMathOperator{\PSL}{PSL}

\DeclareMathOperator{\SL}{SL}

\newcommand{\transpose}[1]{{#1}{\hspace*{-0.2pt}\raisebox{4pt}{$\scriptstyle\mathsf{T}$}\hspace*{0.5pt}}}
\newcommand{\transposeminus}[1]{{#1}{\hspace*{-4.8pt}\raisebox{4pt}{$\scriptstyle\mathsf{T}$}\hspace*{0.5pt}}}

\newcommand{\att}{\textnormal{att}}
\newcommand{\symgp}[1]{\mathfrak{S}_{#1}}

\title{The totally nonnegative wonderful compactification of \texorpdfstring{$G$}{G} is a ball}

\author{Sam Evens}
\address{Department of Mathematics, University of Notre Dame}
\email{\href{mailto:sevens@nd.edu}{sevens@nd.edu}}
\author{Steven N. Karp}
\address{Department of Mathematics, University of Notre Dame}
\email{\href{mailto:skarp2@nd.edu}{skarp2@nd.edu}}

\subjclass[2020]{20G20, 14M27, 15B48, 05E45}

\begin{document}

\begin{abstract}
Let $\overline{G}$ denote the wonderful compactification of a semisimple algebraic group $G$ of adjoint type. We use contractive flows to show that the totally nonnegative part of every $G\times G$ orbit closure of $\overline{G}$ is homeomorphic to a closed Euclidean ball. In particular, the totally nonnegative part of $\overline{G}$ is homeomorphic to a closed ball.
\end{abstract}

\maketitle

\section{Introduction}\label{sec:intro}

\noindent Let $G$ denote a complex connected semisimple algebraic group of adjoint type. De Concini and Procesi \cite{de_concini_procesi83} introduced the \emph{wonderful compactification} $\overline{G}$ of $G$ (see \cref{sec:Gbar} for the definition). It has since been extensively studied, appearing in representation theory \cite{bezrukavnikov_kazhdan15}, arithmetic geometry and ergodic theory \cite{gorodnik_maucourant_oh08}, Poisson geometry \cite{evens_lu01}, and the moduli of principal bundles \cite{martens_thaddeus16}. The group $G\times G$ acts on $\overline{G}$, and its orbits $Z_J$ are indexed by subsets $J$ of the set of nodes $I$ of the Dynkin diagram of $G$, where $Z_I = G$. The compactification $\overline{G}$ is called `wonderful' because not only is $\overline{G}$ smooth, but so is every orbit closure $\overline{Z_J}$.

On the other hand, inspired by both the classical theory of total positivity for matrices \cite{karlin68} and the representation theory of canonical bases for quantum groups \cite{lusztig90}, Lusztig introduced the totally positive and totally nonnegative parts of semisimple algebraic groups \cite{lusztig94}. Motivated by this work, He \cite{he04} studied the totally nonnegative part $\overline{G_{\ge 0}}$ of the wonderful compactification $\overline{G}$. He constructed a cell decomposition of $\overline{G_{\ge 0}}$ (resolving a problem of Lusztig \cite[Section 9]{lusztig94}), and showed that $\overline{G_{\ge 0}}$ and the totally nonnegative part $Z_J^{\ge 0}$ of every $G\times G$ orbit are contractible.

Our main result determines the topology of the totally nonnegative part of every $G\times G$ orbit closure of $\overline{G}$:
\begin{theorem}\label{main}
For all $J\subseteq I$, let $\overline{Z_J^{\ge 0}}$ denote the totally nonnegative part of the $G\times G$ orbit closure of $\overline{G}$ indexed by $J$. Then $\overline{Z_J^{\ge 0}}$ is homeomorphic to a closed Euclidean ball. In particular, $\overline{G_{\ge 0}}$ is homeomorphic to a closed ball.
\end{theorem}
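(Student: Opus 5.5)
The approach is the contractive flow criterion of Galashin--Karp--Lam: if $X\subseteq\mathbb{R}^N$ is compact and $f:\mathbb{R}\times\mathbb{R}^N\to\mathbb{R}^N$ is a contractive flow with $f(t,X)\subseteq X$ for $t\ge 0$, then $X$ is homeomorphic to a closed ball of dimension $\dim X$. Contractive means there is a fixed point $x_0$ such that $\|f(t,x)-x_0\|<\|x-x_0\|$ for $t>0$ and $x\ne x_0$. The plan therefore has three parts. First, embed $\overline{Z_J}$ into a real vector space (or its projectivization) so that $\overline{Z_J^{\ge 0}}$ lands in a compact subset of an affine chart. Second, find a one-parameter family of maps that preserves $\overline{Z_J^{\ge 0}}$. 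Third, choose the norm so that these maps contract toward a single point.

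\textbf{Step 1: embedding.} Take a regular dominant weight $\lambda$ and the De Concini--Procesi embedding $\overline{G}\hookrightarrow \mathbb{P}(\End V(\lambda))$ as the closure of the image of $G$. Work in a basis of $V(\lambda)$ given by Lusztig's canonical basis, so that total nonnegativity corresponds to nonnegativity of the matrix coefficients. The highest-weight-to-highest-weight matrix coefficient is positive on $\overline{G_{\ge 0}}$, so $\overline{G_{\ge 0}}$ sits inside the affine chart where this coefficient equals $1$, and it is compact there. Since $\overline{Z_J^{\ge 0}}=\overline{Z_J}\cap\overline{G_{\ge 0}}$ is closed, it is compact as well. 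I would also check that $\overline{Z_J^{\ge 0}}$ is the closure of $Z_J^{\ge 0}$, so that its dimension is $\dim_{\mathbb{R}} Z_J^{\ge 0}$.

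\textbf{Step 2: the flow.} The natural candidate is
\[
f(t,g)=\exp(te)\,g\,\exp(t\transpose{e}),
\]
or a symmetrized version of it. Here $e=\sum_{i\in I}e_i$ is the principal nilpotent, so $\exp(te)\in U^+_{\ge 0}$ and $\exp(t\transpose{e})\in U^-_{\ge 0}$ for $t\ge 0$. Multiplication by totally nonnegative elements of $G$ on the left and right preserves $\overline{G_{\ge 0}}$ and each $\overline{Z_J}$, so for $t\ge 0$ this flow preserves $\overline{Z_J^{\ge 0}}$. However, this map is not contractive in the usual norm. Following the approach for $(G/P)_{\ge 0}$, I would instead use a flow of the form $g\mapsto \exp(tE)\cdot g$, where $E$ is an element of $\mathfrak{g}\oplus\mathfrak{g}$ built from $e+\transpose{e}$ with a suitable sign twist. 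I expect the fixed point to be the unique totally positive point of the closed orbit $Z_\emptyset$, or the limit $\lim_{t\to\infty}$ of the action on the identity. This fixed point must lie in $\overline{Z_J}$ for every $J$, and $Z_\emptyset\subseteq\overline{Z_J}$ for every $J$ does make that possible. The element $e+\transpose{e}$ acts on $V(\lambda)$ as a symmetric matrix with a simple top eigenvalue whose eigenvector has positive canonical-basis coordinates, by a Perron--Frobenius argument. So conjugation-type flows push every totally nonnegative point toward the rank-one projector onto this eigenvector, which is the point of $Z_\emptyset$.

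\textbf{Step 3: contractivity (main obstacle).} The hardest part is to show that $\|f(t,x)-x_0\|<\|x-x_0\|$ holds for a suitable inner product, and that it holds on the compact set rather than just asymptotically. I would first diagonalize $e+\transpose{e}$ on $V(\lambda)$. Writing $x-x_0$ in the eigenbasis, the normalized flow scales each component by $e^{t(\mu-\mu_{\max})}$, so it strictly shrinks every component except the $x_0$ direction. The component along $x_0$ itself is killed by the affine normalization, provided the chart is chosen as the hyperplane orthogonal to $x_0$, not the highest-weight-coefficient chart. Compactness of $\overline{Z_J^{\ge 0}}$ in this new chart needs the pairing of $x_0$ with any totally nonnegative point to be positive. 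This follows because $x_0$ has strictly positive coordinates and totally nonnegative points are nonzero with nonnegative coordinates. Finally, applying the Galashin--Karp--Lam theorem gives that $\overline{Z_J^{\ge 0}}$ is a closed ball, and the case $J=I$ gives $\overline{G_{\ge 0}}$.
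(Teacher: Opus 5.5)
There is a genuine gap, and it is exactly where the paper's main work lies. The criterion you quote is false as stated: a compact set that is invariant under a contractive flow for $t\ge 0$ need not be a ball. For example, $\{|p-(1,0)|\le 1\}\cup\{|p+(1,0)|\le 1\}\subseteq\mathbb{R}^2$ is compact, is the closure of its interior, and is invariant under $p\mapsto e^{-t}p$ for $t\ge 0$. Yet it is a wedge of two discs.

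The actual Galashin--Karp--Lam lemma also requires that, for $t>0$, the flow maps $K$ into its relative interior inside an invariant smooth submanifold of the right dimension. In particular the attractor must be an interior point. That fails here for every $J\neq\varnothing$, including $J=I$, which is the case of $\overline{G_{\ge 0}}$. As you note yourself, the attractor $A_\att=(g_x,g_x)\cdot z_\varnothing^\circ$ lies in $Z_\varnothing$. Since the flow acts through $G\times G$, it preserves each stratum $Z_{J'}^{\ge 0}$ with $J'\subsetneq J$, and these strata lie on the boundary of $\overline{Z_J^{\ge 0}}$. Near $A_\att$ the space looks like $\mathbb{R}_{\ge 0}^{|J|}\times\mathbb{R}_{>0}^{2d}$, with $d$ the length of $w_0$, and $A_\att$ sits at the corner. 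By invariance of domain, no choice of submanifold makes it an interior point. So contractivity (your Step 3) is not the main obstacle; it is routine once $x$ is conjugated into $\mathfrak{t}$, which you essentially do. Only for $J=\varnothing$, where $A_\att\in Z_\varnothing^{>0}$, does the original lemma apply.

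What is missing is a way to handle a boundary attractor. The paper proves a modified lemma: if $K'=\bigcup_{t>0}f(t,K)$ is relatively open in $K$ and $B_r\cap K\subseteq K'$, then $K\cong B_r\cap K$. The key step is continuity of the entry time $t_\partial(p)=\min\{t: f(t,p)\in K\}$. The paper then has to do three things:
\begin{enumerate}
\item identify $K'$ with $\bigsqcup_{J'\subseteq J}Z_{J'}^{>0}$, using He's result that $(g_1,g_2^{-1})$ with $g_1,g_2\in G_{>0}$ maps $Z_{J'}^{\ge 0}$ into $Z_{J'}^{>0}$, together with $\overline{Z_J^{\ge 0}}=\bigsqcup_{J'\subseteq J}Z_{J'}^{\ge 0}$;
\item show this set is relatively open, via He's parametrization by $U_-^{>0}\times T_{>0}\times U_+^{>0}$ and the toric chart $Y_J\cap\overline{T_{>0}}\cong\mathbb{R}_{\ge 0}^{|J|}$;
\item deduce that $B_r\cap K$ is a closed ball for small $r$.
\end{enumerate}
None of this is in your plan, and without it the argument does not conclude.

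There are also two smaller problems. First, your Step 1 claim that the $v_0\otimes v_0^*$ coefficient is positive on $\overline{G_{\ge 0}}$ is false: for $\PSL_2$, $[\mathrm{diag}(0,1)]$ is a limit of $T_{>0}$. Your later switch to the chart given by the top eigen-tensor of the flow is the right fix. Second, the Perron--Frobenius argument for that chart should pair $A$ with the left and right Perron vectors, giving $w^\vee(Aw)>0$, rather than rely on orthogonality in canonical coordinates. The canonical basis need not be orthonormal for the form that makes $x$ symmetric. Moreover, the canonical-basis positivity you use is only available in simply-laced type, so the general case needs folding, as in the paper.
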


\subsection{Motivation}\label{sec:motivation}
We provide some background motivation for studying totally positive spaces and their topology. Since Lusztig's work on total positivity and the development of cluster algebras by Fomin and Zelevinsky \cite{fomin_zelevinsky02} in the 1990's, totally positive spaces have attracted a lot of attention and found diverse applications. For example, they appear in work in soliton solutions of the KP equation \cite{kodama_williams14}, scattering amplitudes in physics \cite{arkani-hamed_bourjaily_cachazo_goncharov_postnikov_trnka16,williams23}, higher Teichm\"{u}ller theory \cite{guichard_wienhard18}, the Ising model \cite{galashin_pylyavskyy20}, and real algebraic geometry \cite{karp_purbhoo}.

The first motivation for studying the topology of totally positive spaces dates to work of Bj\"{o}rner \cite{bjorner84} in the 1980's on regular CW complexes, which are topological spaces with a cell decomposition in which every cell closure is homeomorphic to a closed ball. A regular CW complex is a combinatorial object in the sense that it is uniquely determined by its closure poset. Total positivity turns out to provide a rich source of regular CW complexes with combinatorially interesting face posets. For example, the link of the identity in the totally nonnegative part of the unipotent radical $U_+$ of the Borel subgroup $B_+\subseteq G$ is a regular CW complex whose face poset is the Weyl group $W$ under the Bruhat order \cite{fomin_shapiro00,hersh14} (answering a question of Bernstein and Bj\"{o}rner), and the totally nonnegative part of the flag variety $G/B_+$ is a regular CW complex whose face poset is the poset of intervals of $W$ under containment \cite{galashin_karp_lam22}.

There are many more instances of this phenomenon which are conjectured \cite[Section 10]{galashin_karp_lam22} (see \cite{bao_he24,bao_hec} for some recent progress in this area). In particular, \cref{main} verifies a special case of the conjecture from \cite[Section 10]{galashin_karp_lam22} that He's cell decomposition of $\overline{G_{\ge 0}}$ is a regular CW complex. While we were writing our paper, He and Xie \cite{he_xie} posted a proof of this conjecture using different methods; see \cref{he_xie_remark}.

A second motivation comes from the recent development of \emph{positive geometries} \cite{arkani-hamed_bai_lam17}, which uses totally positive spaces and their associated canonical differential forms in order to study scattering amplitudes and wavefunctions in physics. For example, a certain projection of the totally nonnegative Grassmannian called an \emph{amplituhedron} encodes tree-level scattering amplitudes in planar $\mathcal{N} = 4$ supersymmetric Yang--Mills theory \cite{arkani-hamed_trnka14}. In this context, cell decompositions of the associated totally positive space can encode different expressions for the scattering amplitude as well as its factorization properties. This provides further impetus for studying the combinatorics and topology of totally positive spaces.

\subsection{Proof technique}\label{sec:proofs}
In order to prove \cref{main} we construct a \emph{contractive flow} on $\overline{Z_J^{\ge 0}}$, which contracts the entire space to a unique attractor. This technique was introduced in \cite{galashin_karp_lam22a} in order to show that the totally nonnegative Grassmannian is homeomorphic to a closed ball. It has since been applied to show that many other spaces appearing in total positivity \cite{galashin_karp_lam19,galashin_pylyavskyy20,fraser,bloch_karp23a,shevchenko,guichard_wienhard} and in the theory of Lorentzian polynomials \cite{branden21,chin_qin} are homeomorphic to closed balls.

There is a natural candidate for a contractive flow on $\overline{Z_J^{\ge 0}}$, which essentially appeared in \cite{lusztig98a,he04,galashin_karp_lam19}: the flow generated by the sum $\sum_{i\in I}(e_i + f_i)$ of all Chevalley generators in the Lie algebra of $G$. The main obstacle in using this flow in order to show that $\overline{Z_J^{\ge 0}}$ is a closed ball is that, unlike all of the applications of contractive flows listed above, the attractor of the flow is on the boundary of the space rather than in its interior. To overcome this obstacle, we do two things:
\begin{enumerate}[label=(\arabic*), leftmargin=36pt, itemsep=4pt]
\item We prove a general topological result (\cref{flow_homeomorphism}) which applies to contractive flows whose attractor lies on the boundary. It essentially states that the homeomorphism type of the entire space is determined by the local topology around its attractor. The statement and proof only use elementary point-set topology. We are hopeful that \cref{flow_homeomorphism} will expand the range of spaces to which the technique of contractive flows can be applied in the future.
\item We determine the local topology around the attractor of the contractive flow on $\overline{Z_J^{\ge 0}}$. Specifically, we show that the attractor is contained in the smallest $G\times G$ orbit $Z_\varnothing$, and that the boundary of $\overline{Z_J^{\ge 0}}$ near the attractor locally looks like a vertex of a $|J|$-dimensional cube times several copies of $\mathbb{R}_{>0}$. In particular, the topology near the attractor is the same as that of a point on the boundary of a closed ball. This is enough to deduce that $\overline{Z_J^{\ge 0}}$ is homeomorphic to a closed ball using \cref{flow_homeomorphism}.
\end{enumerate}

\begin{remark}\label{he_xie_remark}
As we mentioned above, while we were writing up our results, He and Xie \cite{he_xie} posted a proof that $\overline{G_{\ge 0}}$ forms a regular CW complex. This is certainly stronger than \cref{main} since it implies that every cell closure of $\overline{G_{\ge 0}}$ is homeomorphic to a closed ball. (While each $\overline{Z_J^{\ge 0}}$ is an instance of such a cell closure, there are many others.) However, the two approaches used are quite different: while we use contractive flows, \cite{he_xie} uses the atlas method. We briefly compare these two techniques, each of which has its pros and cons.

The atlas method was first used in \cite{galashin_karp_lam22} to prove that totally nonnegative flag varieties are regular CW complexes, building on work of Fomin and Shapiro \cite{fomin_shapiro00} and He, Knutson, and Lu (unpublished). It is powerful and has (so far) proved stronger results than contractive flows. However, it is quite technical and the underlying homeomorphisms rely on the generalized Poincar\'{e} conjecture. On the other hand, contractive flows are more elementary and provide explicit homeomorphisms. This leads to convenient coordinates in which to plot these spaces (for example, see \cref{fig:PSL(2)}). Also, it is not necessary to construct a cell decomposition of a space in order to use contractive flows.
\end{remark}

\subsection{Outline of the paper}\label{sec:outline}
In \cref{sec:background}, we recall the necessary background on algebraic groups and the wonderful compactification of $G$. In \cref{sec:contracting}, we recall the definition of contractive flows and prove our general topological result (\cref{flow_homeomorphism}). In \cref{sec:flow} we define the contractive flow on $\overline{Z_J^{\ge 0}}$ and prove several of its properties, culminating in the proof of \cref{main} in \cref{sec:flow_proof}.

\subsection*{AI usage}
The authors did not use generative AI at any stage of this project.

\subsection*{Acknowledgments}
S.N.K.\ was partially supported by the National Science Foundation under Award No.\ 2452061 and by a travel support gift from the Simons Foundation.

\section{Background}\label{sec:background}

\noindent We recall some background on algebraic groups \cite{humphreys75}, total positivity \cite{lusztig94}, and the wonderful compactification \cite{de_concini_procesi83,he04,evens_jones08}. Some of our exposition follows \cite{galashin_karp_lam22,barkley_karp}.

\subsection{Algebraic groups}\label{sec:background_groups}
Let $G$ be a complex connected semisimple algebraic group of adjoint type which is split over $\mathbb{R}$. Fix an $\mathbb{R}$-split maximal torus $T$ of $G$. Let $B_+$ and $B_-$ be Borel subgroups of $G$ such that $T = B_+ \cap B_-$, and denote their unipotent radicals by $U_+$ and $U_-$, respectively. Let $X(T) := \Hom(T,\mathbb{C}^\times)$ denote the \define{weight lattice}, and let $\{\alpha_i : i\in I\} \subseteq X(T)$ denote the set of \define{simple roots} induced by our choice of $B_+$ and $B_-$.

For every $i\in I$ we pick a homomorphism $\phi_i: \SL_2(\mathbb{C}) \to G$, and define
\[
x_i(t) := \phi_i\!\left(\begin{matrix}1 & t \\ 0 & 1\end{matrix}\right) \quad \text{ and } \quad
y_i(t) := \phi_i\!\left(\begin{matrix}1 & 0 \\ t & 1\end{matrix}\right).
\]
We require that $x_i(\mathbb{C})$ is the root subgroup of $U_+$ corresponding to $\alpha_i$, and similarly that $y_i(\mathbb{C})$ is the root subgroup of $U_-$ corresponding to $-\alpha_i$. We call the data $(T, B_+, B_-, I, \{x_i : i\in I\}, \{y_i :  i\in I\})$ a \define{pinning} for $G$. We also let $\transpose{\cdot}$ denote the unique involutive anti-automorphism of $G$ which satisfies
\begin{align}\label{lie_transpose}
\transpose{g} = g \text{ for all } g\in T
\quad \text{ and } \quad
\transpose{x_i(t)} = y_i(t) \text{ for all $i\in I$ and $t\in\mathbb{C}$}.
\end{align}

We let $W := N(T)/T$ denote the \define{Weyl group} of $G$, with simple generators $s_i$ for $i\in I$ and longest element $w_0\in W$. We define the \define{coweight lattice} $Y(T) := \Hom(\mathbb{C}^\times, T)$, and for all $i\in I$, we have the coroot $\alpha^\vee_i$ defined by
\[
\alpha^\vee_i(t) := \phi_i\!\left(\begin{matrix}t & 0 \\ 0 & t^{-1}\end{matrix}\right) \quad \text{ for all } t\in\mathbb{C}^\times.
\]
We let $\mathfrak{g}$ denote the Lie algebra of $G$, with Chevalley generators $e_i$, $f_i$, and $h_i := [e_i,f_i]$ for $i\in I$, where $x_i(1) = \exp(e_i)$ and $y_i(1) = \exp(f_i)$. Let $\mathfrak{t}$ denote the Lie algebra of $T$.

\begin{example}\label{eg:pinning}
We recall the standard pinning for $G = \PSL_n(\mathbb{C})$, the group of invertible $n\times n$ matrices with determinant $1$ modulo its center (i.e.\ modulo rescaling by $n$th roots of unity). Modulo the center, we let $B_+$ be the subgroup of upper-triangular matrices and $B_-$ be the subgroup of lower-triangular matrices, so that $T = B_+ \cap B_-$ is the subgroup of diagonal matrices. We have $I = \{1, \dots, n-1\}$, and for $i\in I$ the map $\phi_i$ embeds $\SL_2(\mathbb{C})$ in rows and columns $\{i,i+1\}$ (except when $n=2$, in which case $\phi_1$ is a $2$-to-$1$ cover). For example, modulo the center,
\[
x_i(t) = \kbordermatrix{& & i & i+1 & \cr
& \ddots & & & \cr
i & & 1 & t & \cr
i+1 & & 0 & 1 & \cr
& & & & \ddots}
\quad\text{ and }\quad
\alpha^\vee_i(t) = \kbordermatrix{& & i & i+1 & \cr
& \ddots & & & \cr
i & & t & 0 & \cr
i+1 & & 0 & t^{-1} & \cr
& & & & \ddots},
\]
where the matrices agree with the identity matrix $I_n$ in all unspecified entries. The map $\transpose{\cdot}$ is the usual matrix transpose. The Weyl group $W$ is the symmetric group $\symgp{n}$ of all permutations of $\{1, \dots, n\}$.
\end{example}

\subsection{Total positivity}\label{sec:positivity}
Following \cite{lusztig94}, we introduce the \define{totally nonnegative} and \define{totally positive parts} of various spaces introduced above, which we denote by adding `$\ge\!0$' and `$>\!0$', respectively, as a superscript or subscript. We define $U_+^{\ge 0}$ to be the submonoid of $U_+$ generated by $x_i(t)$ for all $i\in I$ and $t\in\mathbb{R}_{>0}$, and we define
\begin{align}\label{Utp_definition}
U_+^{>0} := U_+^{\ge 0} \cap B_-nB_-,
\end{align}
where $n\in N(T)$ denotes any representative of $w_0\in W$. We define $U_-^{\ge 0} = \transpose{(U_+^{\ge 0})}$ and $U_-^{>0} = \transpose{(U_+^{>0})}$.

We define $T_{>0}$ to be the submonoid of $T$ generated by $\alpha^\vee_i(t)$ for all $i\in I$ and $t\in\mathbb{R}_{>0}$. We define $G_{\ge 0}$ to be the submonoid of $G$ generated by $U_+^{\ge 0}$, $U_-^{\ge 0}$, and $T_{>0}$, and we define
\[
G_{>0} := U_-^{>0} T_{>0} U_+^{>0} = U_+^{>0} T_{>0} U_-^{>0}.
\]

\subsection{Wonderful compactification}\label{sec:Gbar}
Given a dominant weight $\lambda\in X(T)$, let $V_\lambda$ be the corresponding irreducible representation of $G$ with highest weight $\lambda$. We let $v_0, \dots, v_m\in V_\lambda$ denote a weight basis with weights $\mu_0, \dots, \mu_m$, such that $v_0$ is the unique basis vector of highest weight $\lambda$. Then $\mu_0 = \lambda$, and for $1 \le j \le m$ we write
\begin{align}\label{weight_basis_expansion}
\lambda-\mu_j = \sum_{i\in I}c_{i,j}\alpha_i,
\end{align}
where $c_{i,j}\in\mathbb{N}$ and at least one value of $c_{i,j}$ (over all $i\in I$, for a fixed $j$) is nonzero.

Let $v_0^*, \dots, v_m^*\in V_\lambda^*$ denote the dual basis of $V_\lambda$. We identify $\End(V_\lambda)$ with $V_\lambda\otimes V_\lambda^*$, which has the basis $v_i\otimes v_j^*$ (for $0 \le i,j \le m$). We let $G\times G$ act on $\End(V_\lambda)$ by $(g_1, g_2)\cdot A := g_1Ag_2^{-1}$ for all $A\in\End(V_\lambda)$. Equivalently,
\begin{align}\label{GG_action_tensor}
(g_1, g_2) \cdot v_i\otimes v_j^* := g_1v_i \otimes g_2v_j^* \quad \text{ for all } 0 \le i,j \le m.
\end{align}

Now suppose that $\lambda$ is a dominant regular weight. Then $G$ embeds diagonally into $\mathbb{P}(\End(V_\lambda))$, and the \define{wonderful compactification} $\overline{G}$ of $G$ is defined to be the Zariski closure of $G$ inside $\mathbb{P}(\End(V_\lambda))$.

The $G\times G$-orbits of $\overline{G}$ are indexed by subsets $J\subseteq I$. Explicitly, following \cite[Sections 2.1 and 2.6]{he04}, $Z_J$ is defined to be the $G\times G$ orbit of the distinguished point $z_J^\circ\in\overline{T}$, which is the limit of $h\in T$ such that $\alpha_i(h) = 1$ for all $i\in J$ and $\alpha_i(h)\to\infty$ for all $i\notin J$. (So for example, $z_I^\circ = e$ and $Z_I\cong G$.) We will need the result about the Zariski closure $\overline{Z_J}$ of $Z_J$:
\begin{lemma}[{\cite[Theorem 3.1(iii)]{de_concini_procesi83}}]\label{ZJ_closure}
For all $J\subseteq I$, we have $\overline{Z_J} = \bigsqcup_{J'\subseteq J}Z_{J'}$.\hfill\qed
\end{lemma}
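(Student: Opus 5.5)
The plan is to reduce the statement to a computation in an affine chart where the orbit stratification is visibly a coordinate stratification. I will split it into an easy inclusion ``$\supseteq$'' done by an explicit limit in $\mathbb{P}(\End(V_\lambda))$, and a harder inclusion ``$\subseteq$'' done by working locally in the big cell of $\overline{G}$.

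\emph{Step 1 (explicit form of $z_J^\circ$).} Let $h\in T$ act on $\End(V_\lambda)$ by $(h,1)$. Then $h\cdot\sum_j v_j\otimes v_j^* = \sum_j \mu_j(h)\,v_j\otimes v_j^*$. After dividing by $\lambda(h)$, the coefficient of $v_j\otimes v_j^*$ becomes $\prod_{i\in I}\alpha_i(h)^{-c_{i,j}}$, using \eqref{weight_basis_expansion}. Letting $\alpha_i(h)\to\infty$ for $i\notin J$ while $\alpha_i(h)=1$ for $i\in J$ shows that
\[
z_J^\circ=\Bigl[\textstyle\sum_{j\,:\,c_{i,j}=0\ \forall i\notin J} v_j\otimes v_j^*\Bigr],
\]
where the sum includes $j=0$. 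In words, $z_J^\circ$ is the projection onto the weight spaces $\mu$ with $\lambda-\mu\in\mathbb{N}\{\alpha_i: i\in J\}$.

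\emph{Step 2 (``$\supseteq$'').} Fix $J'\subseteq J$. Apply $(h,1)$ to $z_J^\circ$ with $\alpha_i(h)=1$ for $i\in J'$ and $\alpha_i(h)\to\infty$ for $i\in J\setminus J'$. The same coefficient computation shows that $(h,1)\cdot z_J^\circ\to z_{J'}^\circ$. Hence $z_{J'}^\circ\in\overline{Z_J}$. Since $\overline{Z_J}$ is closed and $G\times G$-stable, it follows that $Z_{J'}\subseteq\overline{Z_J}$.

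\emph{Step 3 (``$\subseteq$'').} Here I would use the De Concini--Procesi big cell. This is the open subset $\overline{G}_0:=U_-\,\overline{T_0}\,U_+\cdot z_\varnothing^\circ$. The map $U_-\times\mathbb{A}^I\times U_+\to\overline{G}_0$ is an isomorphism, where the coordinates $t_i$ on $\mathbb{A}^I$ are given on $T$ by $\alpha_i^{-1}$. Under this isomorphism, $Z_J\cap\overline{G}_0$ corresponds exactly to $\{t_i\neq 0 \iff i\in J\}$. This is checked on the $\mathbb{A}^I$ factor by Step 1: the point $t$ with $t_i=1$ for $i\in J$ and $t_i=0$ otherwise is $z_J^\circ$. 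In the chart, the closure of $\{t_i\ne0\iff i\in J\}$ is $\{t_i=0 \ \forall i\notin J\}$, which is the union of the strata indexed by $J'\subseteq J$. Every $G\times G$-orbit meets $\overline{G}_0$, because $z_{J}^\circ\in\overline{G}_0$ for every $J$. Consequently the translates $(g_1,g_2)\overline{G}_0$ cover $\overline{G}$. Closure is local, so $\overline{Z_J}\subseteq\bigcup_{J'\subseteq J}Z_{J'}$. Disjointness of the union is immediate, since distinct $J$ give distinct orbits; this is already visible in the chart.

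\emph{Main obstacle.} I expect the hard part to be establishing the big-cell isomorphism and the identification of $Z_J\cap\overline{G}_0$ with the coordinate stratum. This requires showing that the $U_-\times U_+$-action on $\overline{T_0}\cdot z_\varnothing^\circ$ is free, with image open and closed in the appropriate sense. My first approach would be to use the matrix coefficient $v_0^*\otimes v_0$: it is nonvanishing exactly on $\overline{G}_0$. I would then recover the $U_\pm$ components from the highest-weight row and column, in the spirit of Gauss decomposition, and read off the $t_i$ from the coefficients of $v_j\otimes v_0^*$ for $\mu_j=\lambda-\alpha_i$.
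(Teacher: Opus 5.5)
You have the standard De Concini--Procesi route: base points as torus limits, the inclusion $\supseteq$ by one-parameter limits, and the inclusion $\subseteq$ via the big cell $\overline{G}_0=\overline{G}\cap\mathbb{P}(\End(V_\lambda))_0\cong U_-\times\mathbb{A}^I\times U_+$ and its $G\times G$-translates. The paper gives no proof of its own, only the citation. Steps 1 and 2 are correct, and Step 2 is the same computation the paper uses in proving \cref{stratum_closure_description}. There is a genuine gap in Step 3, at the claim that $Z_J\cap\overline{G}_0$ is \emph{exactly} the stratum $U_-Y_J^\circ U_+=\{t_i\neq 0\iff i\in J\}$. Step 1 only gives $z_J^\circ=F(e_J)$, hence $U_-Y_J^\circ U_+\subseteq Z_J$. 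Each stratum $U_-Y_K^\circ U_+$ is a single $B_-\times B_+$-orbit, so all you can conclude is that $Z_J\cap\overline{G}_0$ is the union of the strata $K$ with $Z_K=Z_J$. Neither your bound on $\overline{Z_J}$ nor the disjointness in the statement follows until you know the $Z_K$ are pairwise distinct. That is not ``visible in the chart'': elements of $G\times G$ outside $B_-\times B_+$ do not preserve $\overline{G}_0$, so disjointness of the coordinate strata says nothing about whether one orbit meets several of them. (Also, in Step 3 you presumably mean $\overline{G}_0=U_-\,Y\,U_+$ with $Y$ the closure of $T$ in the chart; since $z_\varnothing^\circ$ is $T\times T$-fixed, ``$\overline{T_0}\cdot z_\varnothing^\circ$'' is a single point.)

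To close the gap you can import the orbit classification, which the paper lists as background but which comes from the same theorem. Alternatively, supply an orbit invariant, as follows.
\begin{itemize}
\item Every nonempty closed $G\times G$-stable subset of $\overline{G}$ contains a closed orbit, hence, by Borel's fixed point theorem, a $B_+\times B_-$-fixed point. The only such point in $\mathbb{P}(\End(V_\lambda))$ is $[v_0\otimes v_0^*]=z_\varnothing^\circ\in\overline{G}_0$.
\item This gives $(G\times G)\cdot\overline{G}_0=\overline{G}$ without presupposing that every orbit is some $Z_J$, which your phrase ``because $z_J^\circ\in\overline{G}_0$'' does presuppose.
\item It also shows that every irreducible component of the closed $G\times G$-stable set $\overline{G}\setminus G$ meets $\overline{G}_0$, since each component is $G\times G$-stable by connectedness.
\item As $\lambda$ is regular, $G\cap\overline{G}_0=U_-TU_+$. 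So these components are exactly the divisors $S_i=\overline{\{t_i=0\}}$, with $S_i\cap\overline{G}_0=\{t_i=0\}$.
\item Then $p\mapsto\{i: p\in S_i\}$ is a $G\times G$-invariant equal to $I\setminus J$ on $U_-Y_J^\circ U_+$. This separates the strata and makes your Step 3 go through.
\end{itemize}

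There is also an error in your big-cell sketch. Recovering $u_-$ and $u_+$ from the highest-weight column and row is fine: for regular $\lambda$ the map $u\mapsto uv_0$ is injective on $U_-$, and $U_-v_0$ is closed in the affine chart, which handles limit points. But the $t_i$ cannot be read from the coefficients of $v_j\otimes v_0^*$, because for $A=u_-F(t)u_+^{-1}$ one has $Av_0=u_-v_0$, independent of $t$. Instead, $t_i$ is the $v_i\otimes v_i^*$-coefficient of $u_-^{-1}Au_+$. Equivalently, it is the $2\times 2$ minor of $A$ on $\{v_0,v_i\}$ after normalizing $\langle v_0^*,Av_0\rangle=1$. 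For $\PSL_2(\mathbb{C})$, $A=\left(\begin{smallmatrix}1&b\\a&ab+t\end{smallmatrix}\right)$ and $t=\det A$. Surjectivity onto $\overline{G}_0$ then follows because $u_-^{-1}Au_+$ lies in $T$ on the dense subset $G\cap\overline{G}_0$, hence in the closed set $Y=F(\mathbb{A}^I)$ throughout.
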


We let $\overline{G_{\ge 0}}$ denote the Euclidean closure of $G_{\ge 0}$ inside $\overline{G}$. (Since $G_{\ge 0}$ is the Euclidean closure of $G_{>0}$ inside $G$, we see that $\overline{G_{\ge 0}}$ equals the Euclidean closure of $G_{>0}$ inside $\overline{G}$, so we could have denoted $\overline{G_{\ge 0}}$ by $\overline{G_{>0}}$.) We similarly let $\overline{T_{>0}}$ denote the Euclidean closure of $T_{>0}$ inside $\overline{G}$. We let $Z_J^{\ge 0} := Z_J \cap \overline{G_{\ge 0}}$ denote the totally nonnegative part of $Z_J$, and let $\overline{Z_J^{\ge 0}}$ denote its Euclidean closure. (We will show in \cref{stratum_closure_description} that $\overline{Z_J^{\ge 0}} = \overline{Z_J}\cap\overline{G_{\ge 0}}$.) We have $z_J^\circ \in Z_J^{\ge 0}$. Following \cite[Section 2.6]{he04}, we also define
\[
Z_J^{>0} := \{(g_1,g_2^{-1})\cdot z_J^\circ : g_1,g_2\in G_{>0}\}.
\]
We will need the following properties of $Z_J^{>0}$ and $Z_J^{\ge 0}$:
\begin{lemma}[{He \cite[Section 2.6 and Corollary 2.9]{he04}}]\label{he_ZJ}
Let $J\subseteq I$.
\begin{enumerate}[label=(\roman*), leftmargin=*, itemsep=2pt]
\item\label{he_ZJ_paramtrization} We have
\[
Z_J^{>0} = \{(u_-h,u_+^{-1})\cdot z_J^\circ : u_-\in U_-^{>0}, h\in T_{>0}, u_+\in U_+^{>0}\}.
\]
Moreover, $u_-$ and $u_+$ above (in the expression of an element of $Z_J^{>0}$) are unique.
\item\label{he_ZJ_containment} For all $g_1,g_2\in G_{>0}$, we have $(g_1,g_2^{-1})\cdot Z_J^{\ge 0} \subseteq Z_J^{>0}$.\hfill\qed
\end{enumerate}
\end{lemma}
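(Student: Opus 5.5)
The plan for both parts is to reduce everything to the stabilizer of $z_J^\circ$ in $G\times G$ and to the standard monoid properties of total positivity in $G$. Let $P_J^\pm$ be the standard parabolics containing $B_\pm$, with Levi $L_J$ and unipotent radicals $U_{P_J^\pm}$.

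\textbf{Stabilizer of $z_J^\circ$.} Write $z_J^\circ=\lim h$ with $\alpha_i(h)=1$ for $i\in J$ and $\alpha_i(h)\to\infty$ for $i\notin J$. For a positive root $\beta$ outside the root system of $L_J$, we have $h^{-1}x_\beta(t)h = x_\beta(\beta(h)^{-1}t)\to e$. Hence $(u,1)\cdot z_J^\circ=z_J^\circ$ for $u\in U_{P_J^+}$. Symmetrically, $(1,u')\cdot z_J^\circ=z_J^\circ$ for $u'\in U_{P_J^-}$, since $hu'h^{-1}\to e$. Since $L_J$ commutes with $h$ up to the center of $L_J$, the diagonal copy of $L_J$ fixes $z_J^\circ$. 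So the stabilizer contains $\{(lu,lu') : l\in L_J,\ u\in U_{P_J^+},\ u'\in U_{P_J^-}\}$.

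\textbf{Part (i).} Take $g_1,g_2\in G_{>0}$ and factor $g_1=u_-h_1u_+'$ and $g_2=u_-'h_2u_+$ using $G_{>0}=U_-^{>0}T_{>0}U_+^{>0}$. Then
\[
(g_1,g_2^{-1})\cdot z_J^\circ = u_-h_1\,\bigl(u_+'\, z_J^\circ\, u_-'\bigr)\,h_2u_+ .
\]
Decompose $u_+'=l_+u$ with $l_+\in L_J\cap U_+$ and $u\in U_{P_J^+}$, and $u_-'=u''l_-$ with $l_-\in L_J\cap U_-$ and $u''\in U_{P_J^-}$. The factors $u$ and $u''$ are absorbed by $z_J^\circ$. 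The Levi factor $l_+l_-\in L_J$ can be moved diagonally through $z_J^\circ$, which produces $z_J^\circ\cdot l$ for some $l\in L_J$. This $l$ must then be recombined with $h_2u_+$ into something of the form $h\,u_+^{\mathrm{new}}$ with $u_+^{\mathrm{new}}\in U_+^{>0}$, up to further absorption.

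The cleaner route is probably to run this recombination inside the totally nonnegative monoid. One uses that $L_J\cap G_{\ge0}$ is the totally nonnegative part of $L_J$, and that $G_{\ge0}\cdot U_+^{>0}$ and $U_-^{>0}\cdot G_{\ge0}$ land in $G_{>0}$. Then one refactors and re-absorbs until only $u_-$, a torus element and $u_+$ remain. For uniqueness, project $Z_J\to G/P_J^-\times G/P_J^+$; the fibration structure of the orbit follows from the stabilizer computation. The first coordinate of $(u_-h,u_+^{-1})\cdot z_J^\circ$ is $u_-P_J^-$. I would show that $U_-^{>0}\to G/P_J^-$ is injective, using that $U_-^{>0}$ lies in the big cell $B_+w_0B_+$ and Lusztig's parametrization, and argue similarly for $u_+$. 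I expect this uniqueness step to be more delicate than stated: injectivity only holds after fixing the $L_J$-parts. I would check it against He's argument, which keeps the full $u_\pm$ by also using the torus coordinate.

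\textbf{Part (ii).} First prove the description
\[
Z_J^{\ge0}=\{(a,b^{-1})\cdot z_J^\circ : a,b\in G_{\ge0}\}.
\]
Granting this, for $z=(a,b^{-1})\cdot z_J^\circ$ we get
\[
(g_1,g_2^{-1})\cdot z=(g_1a,(bg_2)^{-1})\cdot z_J^\circ,
\]
with $g_1a,\ bg_2\in G_{>0}$ because $G_{>0}G_{\ge0}\cup G_{\ge0}G_{>0}\subseteq G_{>0}$. This is exactly an element of $Z_J^{>0}$.

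The inclusion $\supseteq$ in the description is a limit argument: $a\,h\,b\in G_{\ge0}$ converges to $(a,b^{-1})\cdot z_J^\circ$. The inclusion $\subseteq$ is the main obstacle. Given $g_n\in G_{>0}$ converging to $z\in Z_J$, factor $g_n=u_-^{(n)}h_nu_+^{(n)}$. Pass to subsequences using compactness of the totally nonnegative parts of the flag varieties $G/B_\pm$, which control the limits of $u_\mp^{(n)}$ modulo $B_\mp$. Write the limits as $a,b\in G_{\ge0}$ up to elements of $P_J^\pm$ that are absorbed by $z_J^\circ$. The torus part $h_n$ must converge in $\overline{T_{>0}}$ to a point of $Z_J$, namely $t\cdot z_J^\circ$ with $t\in T_{>0}$, because $z\in Z_J$ forces $\alpha_i(h_n)$ to stay bounded for $i\in J$ and to tend to infinity for $i\notin J$. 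I would first try to make this rigorous by working in the embedding into $\mathbb{P}(\End(V_\lambda))$, using the expansion \eqref{weight_basis_expansion} to read off the growth of each $\alpha_i(h_n)$ from matrix coefficients.
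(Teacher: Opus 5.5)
The paper does not prove this lemma; it quotes He [Section 2.6, Corollary 2.9]. Taken on its own, your argument for (ii) fails at its first step, because the description $Z_J^{\ge 0}=\{(a,b^{-1})\cdot z_J^\circ : a,b\in G_{\ge 0}\}$ is false.

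Here is a counterexample. Take $G=\PSL_2(\mathbb{C})$ acting on $2\times 2$ matrices, take $J=\varnothing$, and write $E_{11},E_{22}$ for the diagonal matrix units.
\begin{itemize}
\item We have $z_\varnothing^\circ=\lim_{s\to\infty}[\alpha_1^\vee(s)]=[E_{11}]$. On the other hand, $[E_{22}]=\lim_{s\to 0^+}[\alpha_1^\vee(s)]$ is a rank-one point of $\overline{T_{>0}}$, so $[E_{22}]\in Z_\varnothing^{\ge 0}$.
\item Every $a\in G_{\ge 0}$ is represented by a totally nonnegative matrix of determinant $1$. Then $a_{11}a_{22}=1+a_{12}a_{21}\ge 1$ forces $a_{11}>0$.
\item Hence $(aE_{11}b)_{11}=a_{11}b_{11}>0$ for all $a,b\in G_{\ge 0}$, so $[E_{22}]$ is not of your form. (It equals $(\dot s,\dot s)\cdot z_\varnothing^\circ$ for a representative $\dot s$ of $s_1$.)
\end{itemize}

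In general $Z_\varnothing^{\ge 0}\cong (G/B_+)_{\ge 0}\times(G/B_-)_{\ge 0}$, but your set only contains points whose projections lie in the opposite big cells $U_-B_+/B_+$ and $U_+B_-/B_-$. Your limit argument breaks exactly where you write the limits as elements of $G_{\ge 0}$ up to $P_J^\pm$. Two things go wrong there:
\begin{itemize}
\item Limits of $u_-^{(n)}B_+$ in $(G/B_+)_{\ge 0}$ can leave $U_-B_+/B_+$.
\item $z\in Z_J$ does not force $\alpha_i(h_n)\to\infty$ for $i\notin J$. For instance, $g_n=y_1(1)\alpha_1^\vee(s_n)x_1(1)\in G_{>0}$ with $s_n\to 0^+$ satisfies $[g_n]\to[E_{22}]$, yet $\alpha_1(h_n)=s_n^2\to 0$.
\end{itemize}
So (ii) needs an input that controls such boundary points. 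One candidate is a statement of the type ``$G_{>0}$ maps the entire totally nonnegative (partial) flag variety into its totally positive part'', together with control of the $L_J$-fibre of $Z_J$. Supplying this is the real content behind the citation.

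In (i), the uniqueness step is missing its key input.
\begin{itemize}
\item Your parabolics are swapped: $u_-\in P_J^-$, so $u_-P_J^-$ is constant. The equivariant projection is $(g_1,g_2)\cdot z_J^\circ\mapsto(g_1P_J^+,g_2P_J^-)$.
\item Even this projection sees $u_-$ only modulo $U_-\cap L_J$. As you suspected, it is not injective: for $i\in J$ and $t>0$, the elements $u_-$ and $u_-y_i(t)$ of $U_-^{>0}$ have the same image in $G/P_J^+$.
\item What you need is the exact stabilizer, not just the subgroup you exhibited: $\operatorname{Stab}(z_J^\circ)=\{(p,q)\in P_J^+\times P_J^- : \text{the Levi components of $p$ and $q$ agree modulo } Z(L_J)\}$ (De Concini--Procesi).
\item With this, suppose $(u_-h,u_+^{-1})\cdot z_J^\circ=(u_-'h',(u_+')^{-1})\cdot z_J^\circ$. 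Then $p=h'^{-1}(u_-')^{-1}u_-h\in B_-\cap P_J^+=B_-\cap L_J$ and $q=u_+'u_+^{-1}\in U_+\cap L_J$ satisfy $p\in qZ(L_J)\subseteq qT$. This forces $q=e$ and $p\in T$, so $u_\pm'=u_\pm$, with no positivity needed.
\end{itemize}

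The existence direction also needs repair.
\begin{itemize}
\item The claim $G_{\ge 0}U_+^{>0}\subseteq G_{>0}$ is false, since $e\cdot u_+\notin G_{>0}$.
\item What you actually need is $U_-^{>0}U_-^{\ge 0}\subseteq U_-^{>0}$, $U_+^{\ge 0}U_+^{>0}\subseteq U_+^{>0}$, and the factorization $U_-^{\ge 0}T_{>0}U_+^{\ge 0}$ inside $L_J$.
\item You also need that $l_\pm$ are totally nonnegative. This holds because the projections $U_\pm\to U_\pm\cap L_J$ are homomorphisms killing $x_i(t)$, resp.\ $y_i(t)$, for $i\notin J$.
\item The reverse inclusion $\{(u_-h,u_+^{-1})\cdot z_J^\circ\}\subseteq Z_J^{>0}$ is not addressed at all. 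It needs its own argument, for example choosing $g_1,g_2\in G_{>0}$ whose $L_J$-components are close to $e$ and using that $U_\pm^{>0}$ is open in $U_\pm(\mathbb{R})$.
\end{itemize}
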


\subsection{Folding}\label{sec:folding}
We say that $G$ is \define{simply-laced} if its Cartan matrix is symmetric. The well-known technique of \emph{folding} (also called \emph{descent}) allows one to generalize results for simply-laced $G$ to all $G$. The reason for using folding in total positivity is that desirable positivity properties of the canonical basis from \cite{lusztig90} only hold in the simply-laced case (see \cite[Section 2.7]{barkley_karp} for a more detailed discussion). We will use folding to prove \cref{nonnegative_to_positive}.

Our setup follows \cite[Section 2.8]{he04}. Given $G$ as in \cref{sec:background_groups}, there exists a complex connected semisimple algebraic group $\dot{G}$ of adjoint type which is split over $\mathbb{R}$ and is simply-laced, along with a pinning $(\dot{T}, \dot{B}_+, \dot{B}_-, \dot{I}, \{\dot{x}_i : i\in \dot{I}\}, \{\dot{y}_i :  i\in \dot{I}\})$ and an automorphism $\sigma : \dot{G} \to \dot{G}$ defined over $\mathbb{R}$, satisfying the conditions below. (We point out that we allow $G$ to be simply-laced, in which case we can take $\dot{G} = G$ and $\sigma$ to be the identity.)
\begin{itemize}[itemsep=2pt]
\item The automorphism $\sigma$ preserves the pinning of $\dot{G}$. That is, $\sigma$ stabilizes $\dot{T}$, $\dot{B}_+$, and $\dot{B}_-$ setwise, and $\sigma$ induces a bijection on $\dot{I}$ such that
\[
\sigma(\dot{x}_i(t)) = \dot{x}_{\sigma(i)}(t) \quad \text{ and } \quad \sigma(\dot{y}_i(t)) = \dot{y}_{\sigma(i)}(t) \quad \text{ for all $i\in\dot{I}$ and $t\in\mathbb{C}$}.
\]
\item If $i$ and $j$ are in the same orbit of $\sigma$ on $\dot{I}$, then $i$ and $j$ are in the same connected component of the Dynkin diagram but $\{i,j\}$ is not an edge.
\item We have $G = \dot{G}^\sigma$, $T = \dot{T}^\sigma$, $B_+ = \dot{B}_+^\sigma$, $B_- = \dot{B}_-^\sigma$, and $W = \dot{W}^\sigma$, where $\cdot^\sigma$ denotes taking the fixed-point set of $\sigma$. We can identify $I$ with the set of $\sigma$-orbits of $\dot{I}$, such that
\begin{align}\label{unfolding_formulas}
x_i(t) = \prod_{j\in i}\dot{x}_j(t), \quad y_i(t) = \prod_{j\in i}\dot{y}_j(t), \quad \text{ and } \quad s_i = \prod_{j\in i}\dot{s}_i
\end{align}
for all $i\in I$ and $t\in\mathbb{C}$ (the terms in each product above commute, so the order of multiplication is irrelevant). By \cite[Section 2.4]{lusztig94}, we know that $w_0 = \dot{w}_0$.
\end{itemize}

We will need the following containments:
\begin{lemma}\label{folding_U_containment}
We have $U_+^{>0}\subseteq\dot{U}_+^{>0}$ and $U_-^{>0}\subseteq\dot{U}_-^{>0}$.
\end{lemma}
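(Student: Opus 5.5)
We prove the statement for $U_+$; the statement for $U_-$ then follows by applying the transpose. Indeed, the anti-automorphism $\transpose{\cdot}$ of $\dot{G}$ commutes with $\sigma$, since both are determined by their action on the pinning and $\sigma$ preserves the pinning. Hence it restricts to the transpose on $G=\dot{G}^\sigma$. Transposing $U_+^{>0}\subseteq\dot{U}_+^{>0}$ gives $U_-^{>0}\subseteq\dot{U}_-^{>0}$ by the definitions $U_-^{>0}=\transpose{(U_+^{>0})}$ and $\dot{U}_-^{>0}=\transpose{(\dot{U}_+^{>0})}$.

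For $U_+$, recall from \eqref{Utp_definition} that $U_+^{>0} = U_+^{\ge 0}\cap B_-nB_-$, where $n$ represents $w_0$. The proof has two steps.

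\textbf{Step 1: $U_+^{\ge 0}\subseteq\dot{U}_+^{\ge 0}$.} The monoid $U_+^{\ge 0}$ is generated by the elements $x_i(t)$ with $t>0$. By \eqref{unfolding_formulas}, each such element equals $\prod_{j\in i}\dot{x}_j(t)$, which is a product of generators of $\dot{U}_+^{\ge 0}$. Since $\dot{U}_+^{\ge 0}$ is a monoid, the containment follows.

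\textbf{Step 2: $B_-nB_-\subseteq\dot{B}_-\dot{n}\dot{B}_-$.} We have $B_-=\dot{B}_-^\sigma\subseteq\dot{B}_-$, so it suffices to check that a representative $n\in N(T)\subseteq G$ of $w_0$ is also a representative of $\dot{w}_0$ in $N(\dot{T})$.

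This is the only point requiring care. The inclusion $N(T)\subseteq N(\dot{T})$ is the first thing to establish. Because $G$ contains a regular element of $\dot{T}$, we have $T=\dot T^\sigma\supseteq$ a regular element of $\dot T$, hence $Z_{\dot{G}}(T)=\dot{T}$. Any $n\in N(T)$ normalizes $Z_{\dot{G}}(T)=\dot T$, so $n\in N(\dot{T})$.

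This gives the identification $W=N(T)/T\hookrightarrow \dot{N}(\dot{T})/\dot{T}=\dot{W}$, whose image is $\dot{W}^\sigma$. This is the identification underlying $s_i=\prod_{j\in i}\dot{s}_j$ in \eqref{unfolding_formulas}. Under it, $w_0=\dot{w}_0$ by \cite[Section 2.4]{lusztig94}. Therefore $n$ represents $\dot{w}_0$, and $B_-nB_-\subseteq\dot{B}_-n\dot{B}_-=\dot{B}_-\dot{n}\dot{B}_-$. The double coset $\dot{B}_-\dot{n}\dot{B}_-$ does not depend on the choice of representative $\dot{n}$, since any two representatives differ by an element of $\dot{T}\subseteq\dot{B}_-$.

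\textbf{Conclusion.} Intersecting the containments from Steps 1 and 2 gives
\[
U_+^{>0}=U_+^{\ge 0}\cap B_-nB_-\subseteq \dot{U}_+^{\ge 0}\cap\dot{B}_-\dot{n}\dot{B}_-=\dot{U}_+^{>0}.
\]
The main obstacle is the representative-compatibility in Step 2. If the regularity argument needs justification, one can instead verify it directly: write $w_0$ as a reduced product of the $s_i$. Using \eqref{unfolding_formulas}, this expands to a product of the $\dot{s}_j$ whose length is $\ell(\dot{w}_0)$, so it is a representative of $\dot{w}_0$.
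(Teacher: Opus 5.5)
Your proof is correct and takes essentially the same route as the paper: Step 1 uses the unfolding formulas to get $U_+^{\ge 0}\subseteq\dot{U}_+^{\ge 0}$, Step 2 uses that a representative $n$ of $w_0$ also represents $\dot{w}_0$ because $w_0=\dot{w}_0$, you intersect, and the $U_-$ case follows by applying $\transpose{\cdot}$. The extra details you supply are also correct and make explicit what the paper leaves to the folding setup: that $\transpose{\cdot}$ commutes with $\sigma$ and restricts to the transpose on $G$, and that $N(T)\subseteq N(\dot{T})$ via $Z_{\dot{G}}(T)=\dot{T}$ (the latter is cleanest using that centralizers of tori are connected).
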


\begin{proof}
By \eqref{unfolding_formulas}, we have $U_+^{\ge 0} \subseteq \dot{U}_+^{\ge 0}$. Let $n\in N(T)$ denote a representative of $w_0\in W$, which is also a representative of $\dot{w}_0$ in $N(\dot{T})$ since $w_0 = \dot{w}_0$. By \eqref{Utp_definition}, we have
\[
U_+^{>0} = U_+^{\ge 0} \cap B_-nB_- \subseteq \dot{U}_+^{\ge 0} \cap \dot{B}_-n\dot{B}_- = \dot{U}_+^{>0},
\]
proving the first containment. The second containment then follows by applying $\transpose{\cdot}$.
\end{proof}

\section{Contractive flows}\label{sec:contracting}

\noindent In this section we recall some background on contractive flows, closely following \cite[Section 2]{galashin_karp_lam22a}. We then establish the topological result (\cref{flow_homeomorphism}) which we will apply to prove \cref{main}.

Let $\|\cdot\|$ denote a norm on $\mathbb{R}^N$. We say that a map $f:\mathbb{R}\times \mathbb{R}^N\to\mathbb{R}^N$ is a \define{contractive flow} if the following conditions are satisfied:
\begin{enumerate}[label=(\arabic*), leftmargin=36pt, itemsep=2pt]
\item\label{item:continuous} the map $f$ is continuous;
\item\label{item:action} for all $p\in\mathbb{R}^N$ and $t_1,t_2\in\mathbb{R}$, we have $f(0,p) = p$ and $f(t_1+t_2,p) = f(t_1, f(t_2,p))$ (that is, the flow is given by an $\mathbb{R}$-action on $\mathbb{R}^N$); and
\item\label{eq:contract} for all $p\in\mathbb{R}^N\setminus\{0\}$ and $t > 0$, we have $\|f(t,p)\| < \|p\|$.
\end{enumerate}

We have the following basic properties of contractive flows:
\begin{proposition}[{\cite[Lemma 2.2 and claim in the proof of Lemma 2.3]{galashin_karp_lam22a}}]\label{limits_act}
Let $f:\mathbb{R}\times \mathbb{R}^N\to\mathbb{R}^N$ be a contractive flow.
\begin{enumerate}[label=(\roman*), leftmargin=*, itemsep=2pt]
\item\label{limits_act_1} We have $f(t,0) = 0$ for all $t\in\mathbb{R}$.
\item\label{limits_act_2} For all $p\in\mathbb{R}^N\setminus\{0\}$, the function $t\mapsto\|f(t,p)\|$ is strictly decreasing on $(-\infty,\infty)$.
\item\label{limits_act_3} For all $p\in\mathbb{R}^N\setminus\{0\}$, we have
\[
\lim_{t\to\infty}\|f(t,p)\|=0 \quad \text{ and } \quad \lim_{t\to-\infty} \|f(t,p)\|=\infty.
\]
\item\label{limits_act_4} Let $r > 0$. Then for all $p\in\mathbb{R}^N\setminus\{0\}$, there exists a unique $t\in\mathbb{R}$ such that $\|f(t,p)\| = r$, which we denote by $t_r(p)$. Moreover, the function $t_r : \mathbb{R}^N\setminus\{0\}\to\mathbb{R}$ is continuous.\hfill\qed
\end{enumerate}
\end{proposition}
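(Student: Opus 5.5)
We prove the four parts in order, using only the axioms \ref{item:continuous}--\ref{eq:contract}.

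For \ref{limits_act_1}, suppose $f(t,0)=p\neq 0$ for some $t$. If $t>0$, the group law gives $f(-t,p)=f(-t,f(t,0))=0$, so $p=f(t,0)$ with $0$ the starting point. Then axiom \ref{eq:contract} applied to $p$ at time $-t<0$ is unavailable, so we rewrite instead. Applying \ref{eq:contract} to $p$ at time $s=-t$ when $t<0$ gives $\|f(s,p)\|<\|p\|$, that is, $0<\|p\|$, which is no contradiction. We therefore argue via the fixed structure directly. Since $f(t,\cdot)$ is a bijection with inverse $f(-t,\cdot)$, if $p=f(t,0)\neq 0$ then $f(nt,0)\neq 0$ for all $n\neq 0$. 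Let $q=f(t,0)$ with $t>0$; otherwise swap the roles using the inverse. Then $f(t,q)=f(2t,0)$ and $\|f(2t,0)\|<\|f(t,0)\|$. Iterating, the norms $\|f(nt,0)\|$ strictly decrease in $n\ge 1$. However, $f(t,0)=f(t,f(-t,f(t,0)))$, and applying \ref{eq:contract} to the nonzero point $q':=f(-t,0)$ gives $\|0\|=\|f(t,q')\|<\|q'\|$, which is consistent. Hence the cleanest route is the one in \cite{galashin_karp_lam22a}: the orbit $\{f(s,0)\}$ is a continuous curve through $0$ along which the norm is strictly decreasing wherever it is nonzero. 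At $s=0$ the norm is $0$, so for $s>0$ we would need $\|f(s,0)\|<0$ if $f(s,0)\neq 0$, which is impossible. For $s<0$, if $f(s,0)=q\neq0$ then $f(-s,q)=0$ with $-s>0$, and \ref{eq:contract} gives $0<\|q\|$, which is fine. But then $f(-s/2,q)$ is nonzero by the previous case, with $f(-s/2, f(-s/2,q))=0$, and so on. We instead conclude via $t>0$: $f(-s,q)=0$, hence $f(-2s, q) = f(-s,0)$, which must be $0$ by the $s>0$ case. By injectivity of $f(-2s,\cdot)$ this gives $q=f(s,0)$, contradicting injectivity unless $q=0$. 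Thus $f(t,0)=0$ for all $t$.

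For \ref{limits_act_2}, observe that by \ref{limits_act_1} and bijectivity, $f(s,p)\neq0$ for every $s$ when $p\neq0$. For $s<s'$, axiom \ref{eq:contract} applied to $f(s,p)$ at time $s'-s$ then gives $\|f(s',p)\|<\|f(s,p)\|$.

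For \ref{limits_act_3}, monotonicity gives a limit $L\ge0$ as $t\to\infty$. Suppose $L>0$. The points $f(n,p)$ for $n\in\mathbb{N}$ lie in the compact annulus $\{L\le\|x\|\le\|p\|\}$, so they have a limit point $q\neq0$. By continuity, $f(1,f(n_k,p))\to f(1,q)$, and the left-hand norms tend to $L$, so $\|f(1,q)\|=L=\|q\|$, contradicting \ref{eq:contract}. The limit as $t\to-\infty$ is handled the same way: if the norms stay bounded by some $M$, take a limit point $q$ of $f(-n,p)$ in $\{\|p\|\le\|x\|\le M\}$ and derive the same contradiction.

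For \ref{limits_act_4}, existence and uniqueness of $t_r(p)$ follow from continuity, strict monotonicity, and \ref{limits_act_3} via the intermediate value theorem. Continuity is the main point. Fix $p$ and $\epsilon>0$, and set $t_0=t_r(p)$. Then $\|f(t_0-\epsilon,p)\|>r>\|f(t_0+\epsilon,p)\|$. By continuity of $f$ these strict inequalities persist for $p'$ near $p$, and by monotonicity $t_r(p')\in(t_0-\epsilon,t_0+\epsilon)$.
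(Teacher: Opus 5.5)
Your arguments for parts \ref{limits_act_2}, \ref{limits_act_3} and \ref{limits_act_4} are correct and standard. The paper gives no proof of this proposition and only cites \cite{galashin_karp_lam22a}. The problem is part \ref{limits_act_1}. The several abandoned attempts should simply be deleted. More seriously, the step meant to carry the argument, ``at $s=0$ the norm is $0$, so for $s>0$ we would need $\|f(s,0)\|<0$'', is a non sequitur. Condition \ref{eq:contract} only compares $\|f(t,p)\|$ with $\|p\|$ for a \emph{nonzero} starting point $p$, so it never compares $\|f(s,0)\|$ with $\|f(0,0)\|=0$. Strict decrease along the orbit of $0$ holds only on intervals where that orbit avoids $0$, and you cannot exclude a priori that zeros of $u\mapsto f(u,0)$ accumulate at $u=0$. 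To make your ``continuous curve'' idea rigorous, you would take the last zero $a:=\max\{u\in[0,s] : f(u,0)=0\}$. You would then play continuity at $a$ against $\|f(u,0)\|>\|f(s,0)\|>0$ for $u\in(a,s)$. You never do this.

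A shorter fix uses continuity in the space variable. Fix $t>0$ and let $p\to 0$ with $p\neq 0$. Then $f(t,p)\to f(t,0)$, while $\|f(t,p)\|<\|p\|\to 0$, so $f(t,0)=0$. For $t<0$, we get $f(t,0)=f(t,f(-t,0))=f(0,0)=0$. Your reduction of the case $s<0$ has the right idea but is garbled. From $f(-2s,q)=f(-s,0)=0=f(-2s,0)$, injectivity of $f(-2s,\cdot)$ gives $q=0$ directly; there is no need for ``$q=f(s,0)$, contradicting injectivity''. With part \ref{limits_act_1} repaired this way, the rest of your proposal goes through as written.
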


For $K\subseteq\mathbb{R}^N$ and $t\in\mathbb{R}$, we let $f(t,K)$ denote $\{f(t,p): p\in K\}$. We also let $B_r := \{p\in\mathbb{R}^N : \|p\| \le r\}$ denote the closed ball of radius $r > 0$ centered at the origin.
\begin{lemma}\label{flow_homeomorphism}
Let $K$ be a nonempty compact subset of $\mathbb{R}^N$, and let $f:\mathbb{R}\times \mathbb{R}^N\to\mathbb{R}^N$ be a contractive flow. Suppose that
\begin{align}\label{invariant}
K' := \bigcup_{t>0}f(t,K) \text{ is a relatively open subset of } K.
\end{align}
Take $r > 0$ such that $B_r\cap K \subseteq K'$. Then $K$ is homeomorphic to $B_r\cap K$.
\end{lemma}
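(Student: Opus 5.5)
The plan is to show that the compact set $K\setminus K'$ is a "cross-section" for the flow restricted to $K\setminus\{0\}$. We then shrink $K$ onto $B_r\cap K$ by flowing each point forward for a time that depends continuously on the cross-section point of its orbit.

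\textbf{Preliminary facts.} Since $K'\subseteq K$, we have $f(t,K)\subseteq K$ for all $t>0$. Taking any $p\in K$ and using \cref{limits_act}\ref{limits_act_3} together with the closedness of $K$, we get $0\in K$. By \cref{limits_act}\ref{limits_act_1}, $0=f(1,0)\in K'$. Set $C:=K\setminus K'$, which is compact because $K'$ is relatively open, and which does not contain $0$.

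\textbf{Step 1: each orbit in $K$ starts at a unique point of $C$.} Fix $p\in K\setminus\{0\}$. The set of $t\in\mathbb{R}$ with $f(t,p)\in K$ has three properties:
\begin{itemize}
\item it is upward closed, by forward invariance;
\item it is closed, since $K$ is closed and $f$ is continuous;
\item it is bounded below, since $\|f(t,p)\|\to\infty$ as $t\to-\infty$ while $K$ is bounded.
\end{itemize}
Hence this set is $[a(p),\infty)$ for some $a(p)\le 0$. Let $q(p):=f(a(p),p)$. Then $q(p)\in C$: if instead $q(p)=f(s,k)$ with $k\in K$ and $s>0$, then $f(a(p)-s,p)=k\in K$, contradicting the minimality of $a(p)$. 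Conversely, if $f(t,p)\in K$ with $t>a(p)$, then $f(t,p)=f(t-a(p),q(p))\in K'$. So $q(p)$ is the only point of $C$ on the orbit of $p$.

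\textbf{Step 2: $a$ is continuous on $K\setminus\{0\}$.} Let $p_n\to p\neq 0$ in $K$. The values $a(p_n)$ stay bounded. An upper bound is $0$. For a lower bound, $\|f(t,p_n)\|\ge\|f(t,p)\|-\epsilon$ uniformly for $t$ in a compact interval, and $\|f(t,p)\|$ exceeds $\max_K\|\cdot\|$ once $t$ is very negative. Now take any limit point $a^*$ of the sequence $a(p_n)$. Along the corresponding subsequence, $f(a(p_n),p_n)\in C$ converges to $f(a^*,p)$, which lies in $C$ because $C$ is closed. By the uniqueness in Step 1, $a^*=a(p)$. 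Hence $a$ and $q$ are continuous.

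\textbf{Step 3: the homeomorphism.} Since $B_r\cap K\subseteq K'$, every $c\in C$ satisfies $\|c\|>r$. So $t_r(c)>0$, and $t_r$ is continuous on $C$ by \cref{limits_act}\ref{limits_act_4}. By compactness, $0<t_r\le M$ on $C$ for some constant $M$. Define $\varphi:K\to B_r\cap K$ by
\[
\varphi(0):=0 \quad\text{and}\quad \varphi(p):=f\big(t_r(q(p)),p\big) \text{ for } p\neq 0.
\]
We check three properties.
\begin{itemize}
\item \emph{Well-defined.} Writing $p=f(s,q)$ with $s\ge 0$ and $q=q(p)$, we get $\varphi(p)=f(s+t_r(q),q)$. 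This point lies in $K$, and by \cref{limits_act}\ref{limits_act_2} it has norm at most $r$.
\item \emph{Bijective.} Points of $B_r\cap K$ on the orbit of $q\in C$ are exactly the points $f(u,q)$ with $u\ge t_r(q)$. Hence $\varphi$ restricts to a bijection from each orbit ray onto its part inside $B_r$, and fixes $0$.
\item \emph{Continuous.} Away from $0$, this follows from Step 2. At $0$, we have $\|\varphi(p)\|\le\|p\|$ by \cref{limits_act}\ref{limits_act_2}.
\end{itemize}
A continuous bijection from a compact space to a Hausdorff space is a homeomorphism, which completes the proof.

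The main obstacle is Step 2, the continuity of the entry time $a$. This is precisely where the relative openness of $K'$ enters, through the closedness of $C$ and the uniqueness of the point of $C$ on each orbit.
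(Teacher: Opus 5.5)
Your proof is correct and follows essentially the same route as the paper. Your entry time $a(p)$ and cross-section $C=K\setminus K'$ are the paper's $t_\partial(p)$ and boundary set, and your continuity argument uses the same ingredients (an a priori lower bound replaces the paper's $t_0=-\infty$ case). Your $\varphi(p)=f(t_r(q(p)),p)$ is exactly the paper's $\alpha(p)=f(t_r(p)-t_\partial(p),p)$; the only difference is that you conclude via ``continuous bijection from a compact space to a Hausdorff space'' instead of citing \cite{galashin_karp_lam22a} for the explicit inverse $\beta$.
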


\begin{proof}
Parts of the proof closely follow \cite[proof of Lemma 2.3]{galashin_karp_lam22a}. We begin by pointing out that $0\in K$; this follows from \eqref{invariant} and \cref{limits_act}\ref{limits_act_3}, since $K$ is nonempty and compact.

Given $p \in K \setminus \{0\}$, we define $t_\partial(p)\in(-\infty,0]$ as follows. Let $T(p):=\{t\in\mathbb{R} : f(t,p)\in K\}$. We observe:
\begin{itemize}[itemsep=2pt]
\item $0\in T(p)$;
\item if $t\in T(p)$ then $[t,\infty)\subset T(p)$, by \eqref{invariant};
\item $T(p)$ is bounded from below by \cref{limits_act}\ref{limits_act_3} and compactness of $K$; and
\item $T(p)$ is closed since it is the preimage of $K$ under the continuous map $t\mapsto f(t,p)$.
\end{itemize}
It follows that $T(p) = [t_\partial(p),\infty)$ for some $t_\partial(p)\le 0$. 

We claim that $t_\partial(p)$ is equivalently characterized as follows: for all $t\in\mathbb{R}$, we have 
\begin{align}\label{boundary_containment}
f(t,p)\in K\setminus K' \quad \Leftrightarrow \quad t = t_\partial(p).
\end{align}
To prove the backward direction, suppose otherwise that $f(t_\partial(p),p)\in K'$ (we already know that $f(t_\partial(p),p)\in K$). Then $f(t_\partial(p),p) = f(t,q)$ for some $t>0$ and $q\in K$. This implies that $f(t_\partial(p)-t,p) = q \in K$ and hence $t_\partial(p) - t \in T(p)$, contradicting the definition of $t_\partial(p)$. To prove the forward direction (via the contrapositive), we must show that given $t\neq t_\partial(p)$, we have $f(t,p)\notin K\setminus K'$. If $t < t_\partial(p)$, we have $f(t,p)\notin K$ by the definition of $t_\partial(p)$. If $t > t_\partial(p)$, then $f(t,p) = f(t-t_\partial(p),f(t_\partial(p),p))\in K'$. This proves \eqref{boundary_containment}.

We now claim that $t_\partial$ is continuous on $K\setminus\{0\}$. Given a sequence $(p_m)_{m\ge 0}$ in $K\setminus\{0\}$ converging to $p\in K\setminus\{0\}$, we must show that $t_\partial(p_m)\to t_\partial(p)$ as $m\to\infty$. Suppose otherwise that $t_\partial(p_m)\not\to t_\partial(p)$. Then by passing to a subsequence of $(p_m)_{m\ge 0}$, we may assume that $t_\partial(p_m)\to t_0$ as $m\to\infty$ for some $t_0\in[-\infty,0]$ with $t_0\neq t_\partial(p)$ (where we allow $t_0 = -\infty$). By \eqref{boundary_containment}, the sequence $(f(t_\partial(p_m),p_m))_{m\ge 0}$ is contained in $K\setminus K'$, where $K\setminus K'$ is compact (since $K$ is compact and $K'$ is relatively open in $K$).

First we assume that $t_0\neq -\infty$. Since $f$ is continuous and $K\setminus K'$ is closed, the sequence $(f(t_\partial(p_m),p_m))_{m\ge 0}$ in $K\setminus K'$ converges to $f(t_0,p)\in K\setminus K'$. Applying \eqref{boundary_containment} again we get $t_0 = t_\partial(p)$, a contradiction.

Now we instead assume that $t_0 = -\infty$. Fix $t\in\mathbb{R}$ for the moment. Since the map $q\mapsto\|f(t,q)\|$ on $\mathbb{R}^N$ is continuous at $p$, we have
\[
\|f(t,p_m)\| \ge \|f(t,p)\|-1 \quad \text{ for all $m$ sufficiently large}.
\]
Since $t_0=-\infty$, we have $t_\partial(p_m) \le t$ for all $m$ sufficiently large. By \cref{limits_act}\ref{limits_act_2}, we get
\[
\|f(t_\partial(p_m),p_m)\| \ge \|f(t,p)\|-1 \quad \text{ for all $m$ sufficiently large}.
\]
Now we take $t\to -\infty$. Since $\lim_{t\to-\infty}\|f(t,p)\| = \infty$ (by \cref{limits_act}\ref{limits_act_3}), we get $\|f(t_\partial(p_m),p_m)\|\to\infty$ as $m\to\infty$. This contradicts the fact that $\|\cdot\|$ is bounded on the compact set $K\setminus K'$. Therefore $t_\partial$ is continuous.

Now recall the continuous function $t_r : \mathbb{R}^N\setminus\{0\}\to\mathbb{R}$ from \cref{limits_act}\ref{limits_act_4}. We claim that $t_r(p) > t_\partial(p)$ for all $p\in K\setminus\{0\}$. Indeed, set $q = f(t_\partial(p),p)$, which lies in $K\setminus K'$ by \eqref{boundary_containment}. Since $B_r\cap K \subseteq K'$, we have $q\notin B_r$, so $t_r(q) > 0$. Since $t_r(q) = t_r(p) - t_\partial(p)$, we get $t_r(p) > t_\partial(p)$, as claimed. Hence by \cite[proof of Lemma 2.3 verbatim]{galashin_karp_lam22a}, the maps $\alpha:K \to B_r\cap K$ and $\beta: B_r\cap K\to K$ defined by
\[
\alpha(p) := f(t_r(p) - t_\partial(p),p) \quad \text{ and } \quad \beta(p) := f(t_\partial(p) - t_r(p),p)
\]
for all $p\neq 0$ (with $\alpha(0), \beta(0) := 0$) are inverse homeomorphisms. This completes the proof.
\end{proof}

\begin{remark}
While \cref{flow_homeomorphism} is similar to \cite[Lemma 2.3]{galashin_karp_lam22a}, there are two key differences:
\begin{itemize}[itemsep=2pt]
\item In \cite{galashin_karp_lam22a}, $K'$ (which was denoted by $Q$) is equal to the interior of $K$, which does not necessarily hold in \cref{flow_homeomorphism}. In fact, we will apply \cref{flow_homeomorphism} with $K = \overline{Z_J^{\ge 0}}$ and $K' = \bigsqcup_{J'\subseteq J}Z_{J'}^{>0}$ (see \cref{Kprime_description}), and if $J\neq\varnothing$ then $K'$ does indeed contain part of the boundary of $K$.
\item In \cite{galashin_karp_lam22a}, $B_r\cap K$ is assumed to be homeomorphic to a closed ball. While we will also apply \cref{flow_homeomorphism} when $B_r\cap K$ is homeomorphic to a closed ball, it can potentially be used to determine the homeomorphism type of spaces which are not closed balls.
\end{itemize}
\end{remark}

\begin{example}
We demonstrate that the condition \eqref{invariant} of \cref{flow_homeomorphism} is necessary to conclude that $K$ is homeomorphic to $B_r\cap K$. Consider $K\subseteq\mathbb{R}^2$ which is the union of the closed ball $B_1$ with the closed line segment from $(1,0)$ to $(2,0)$, shown below.\vspace*{-2pt}
\[
\begin{tikzpicture}[baseline=(current bounding box.center),scale=1]
\tikzstyle{out1}=[inner sep=0,minimum size=1.2mm,circle,draw=black,fill=black,semithick]
\pgfmathsetmacro{\s}{0.9};
\draw[thick,fill=teal!30](0,0)circle(1);
\draw[thick](1,0)--(2,0);
\node[out1]at(0,0)[label={[left=-1pt]\scalebox{\s}{$(0,0)$}}]{};
\node[out1]at(1,0)[label={[above right=-5pt]\scalebox{\s}{$(1,0)$}}]{};
\node[out1]at(2,0)[label={[right=-1pt]\scalebox{\s}{$(2,0)$}}]{};
\end{tikzpicture}\vspace*{-2pt}
\]
Consider the contractive flow
\[
f(t,(x,y)) := (e^{-t}x, e^{-t}y) \quad \text{ for all $t\in\mathbb{R}$ and $(x,y)\in\mathbb{R}^2$}.
\]
Then $K' := \bigcup_{t>0}f(t,K)$ is the union of the open ball $\interior(B_1)$ with the line segment from $(1,0)$ to $(2,0)$ (including the endpoint $(1,0)$ but excluding $(2,0)$). In particular, the contractive flow preserves $K$ (i.e.\ $K'\subseteq K$), and for all $0 < r < 1$, the set $B_r\cap K$ is contained in $K'$ and is homeomorphic to a closed $2$-dimensional ball. However, $K$ is not homeomorphic to a closed ball. Indeed, \cref{flow_homeomorphism} does not apply here because condition \eqref{invariant} fails, i.e., $K'$ is not relatively open in $K$ (the relative interior of $K'$ in $K$ is $K'\setminus\{(1,0)\}$).
\end{example}

\section{Flow on $\overline{G}$}\label{sec:flow}

\noindent In this section we construct a contractive flow on $\overline{G_{\ge 0}}$, which allows us to apply \cref{flow_homeomorphism} to prove our main result (\cref{main}) that $\overline{Z_J^{\ge 0}}$ is homeomorphic to a closed ball. We encourage the reader to consult \cref{sec:PSL(2)}, which exhibits the various constructions in the case of $G = \PSL_2(\mathbb{C})$.

\subsection{Algebraic preliminaries}
We set
\[
x := \sum_{i\in I}(e_i + f_i) \in \mathfrak{g},
\]
which we will use to define a contractive flow on $\overline{G}$. We begin in this subsection by establishing some properties of $\exp(tx)\in G$ which follow from \cite{lusztig94}.
\begin{lemma}\label{diagonalize}
Let $g\in G_{>0}$. Then there exist unique $h\in G$, $u_-\in U_-^{>0}$, and $s\in T_{>0}$ with the following properties:
\begin{enumerate}[label=(\roman*), leftmargin=*, itemsep=2pt]
\item\label{diagonalize_conjugate} $h^{-1}gh = s$;
\item\label{diagonalize_flag} $h\in u_-U_+$; and
\item\label{diagonalize_superpositive} $\alpha_i(s) > 1$ for all $i\in I$.
\end{enumerate}
Moreover, if $\transpose{g} = g$ then $\transpose{(h^{-1})} \in hT$.
\end{lemma}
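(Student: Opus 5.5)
We will deduce the lemma from Lusztig's result \cite[Theorem 5.6 and Section 8]{lusztig94}. It says that every $g\in G_{>0}$ is regular semisimple and lies in a unique conjugate of $T$ that is in totally positive position, with the conjugating flag determined by $U_-^{>0}$. Concretely, Lusztig shows that the attracting fixed point of $g$ acting on the flag variety $G/B_+$ is a totally positive flag of the form $u_-B_+$ with $u_-\in U_-^{>0}$. The repelling fixed point lies in $B_-$-opposite position.

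\textbf{Existence.} The plan has three steps.
\begin{enumerate}[label=(\arabic*), leftmargin=*, itemsep=2pt]
\item Let $u_-\in U_-^{>0}$ be such that $u_-B_+$ is the attracting fixed point of $g$ on $G/B_+$. Then $u_-^{-1}gu_-\in B_+$.
\item Write $u_-^{-1}gu_- = s u$ with $s\in T$ and $u\in U_+$. By Lusztig's positivity results, $s$ has real positive eigenvalues in every representation. Since $G$ is adjoint, this gives $s\in T_{>0}$.
\item Because $u_-B_+$ is \emph{attracting}, the eigenvalue of $g$ on the tangent space there is $\alpha_i(s)^{-1}$ for each root direction of $\mathfrak{u}_-$. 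Attracting then forces $\alpha(s)>1$ for every positive root $\alpha$, in particular $\alpha_i(s)>1$ for all $i\in I$. This gives \ref{diagonalize_superpositive}.
\end{enumerate}
Since $s$ is regular, $su$ is conjugate to $s$ by a unique element $u'\in U_+$, namely $u'^{-1}su\,u'=s$. Set $h:=u_-u'\in u_-U_+$. Then \ref{diagonalize_conjugate} and \ref{diagonalize_flag} hold.

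\textbf{Uniqueness.} Suppose $(h,u_-,s)$ is any triple satisfying the three conditions. Then $hB_+ = u_-B_+$ is a fixed point of $g$. Condition \ref{diagonalize_superpositive} makes it the attracting fixed point, which is unique. Hence $u_-$ is determined, using the standard fact that $U_-\to G/B_+$ is injective.

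It follows that $s$ is determined: the $T$-part of $u_-^{-1}gu_-$ is fixed, equivalently $s$ is the unique $W$-conjugate of the eigenvalue data lying in the dominant chamber. Then $h$ is determined up to right multiplication by the centralizer of $s$, which is $T$ by regularity. Intersecting with $u_-U_+$ pins down the $U_+$-part, since $u_-U_+\cap u_-U_+t=\varnothing$ unless $t=e$.

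\textbf{Transpose statement.} Suppose $\transpose{g}=g$. Applying $\transpose{\cdot}$ to $h^{-1}gh=s$ gives $\transpose{h}\,g\,\transpose{(h^{-1})}=s$. So $\transpose{(h^{-1})}$ also conjugates $g$ to $s$, and hence $\transpose{(h^{-1})}\in hZ_G(s)=hT$.

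\textbf{Main obstacle.} The main step is the careful citation of Lusztig's attracting-flag result, namely that the attracting flag lies in $U_-^{>0}B_+$ rather than $U_+^{>0}B_+$. This must be squared with the convention that $\alpha_i(s)>1$ corresponds to attraction toward $u_-B_+$. If the conventions come out flipped, I would apply the argument to $g^{-1}$, or use $w_0$ and $\transpose{\cdot}$ to swap roles.

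For non-simply-laced $G$, one could pass through the folding setup of \cref{sec:folding}, using \cref{folding_U_containment}. However, Lusztig's theorem already covers all $G$, so I expect this is unnecessary.
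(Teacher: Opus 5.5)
Your proposal is correct and is essentially the paper's proof. The paper cites Lusztig's Corollary 8.10, which directly gives $g=u_-u_+su_-^{-1}$ with $u_\pm\in U_\pm^{>0}$, $s\in T_{>0}$ and $\alpha_i(s)>1$, so your steps (2)--(3) and your worry about conventions are absorbed into that citation (the totally positive fixed flag $u_-B_+$ is indeed the attracting one); it then conjugates $u_+s$ to $s$ by some $u_+'\in U_+$ and sets $h=u_-u_+'$ exactly as you do, and your uniqueness argument via the unique attracting fixed point is only a dynamical rephrasing of the paper's, which pins down $s$ as the unique element of $W\cdot s$ with all $\alpha_i(s)>1$ and then $h$ up to $T$ via condition (ii), while the transpose argument is identical.
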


\begin{proof}
By \cite[Corollary 8.10]{lusztig94}, there exist unique $u_-\in U_-^{>0}$, $u_+\in U_+^{>0}$, and $s\in T_{>0}$ such that $g = u_-u_+su_-^{-1}$ and $\alpha_i(s) > 1$ for all $i\in I$. We see that $u_+s$ is regular semisimple, so there exists $u_+'\in U_+$ such that $(u_+')^{-1}u_+su_+' = s$. We set $h := u_-u_+'\in G$, so that property \ref{diagonalize_flag} holds. We have
\[
h^{-1}gh = (u_+')^{-1}u_-^{-1}u_-u_+su_-^{-1}u_-u_+' = (u_+')^{-1}u_+su_+' = s,
\]
so properties \ref{diagonalize_conjugate} and \ref{diagonalize_superpositive} hold.

Now we prove uniqueness. Since $g$ is regular semisimple, the $g$-conjugates of $s$ in $T$ are precisely $W\cdot s$, and $s$ is the unique element of $W\cdot s$ satisfying property \ref{diagonalize_superpositive}. Therefore $s$ is uniquely determined. Similarly, property \ref{diagonalize_conjugate} determines $h$ up to right multiplication by $T$, whence property \ref{diagonalize_flag} uniquely determines $h$ (and also $u_-$ and $u_+'$). Finally, if $\transpose{g} = g$ then
\[
hsh^{-1} = g = \transpose{g} = \transpose{(h^{-1})}s\transpose{h},
\]
so $\transpose{(h^{-1})}\in hT$.
\end{proof}

\begin{proposition}\label{exptx_properties}~
\begin{enumerate}[label=(\roman*), leftmargin=*, itemsep=2pt]
\item\label{exptx_properties_transpose} $\transpose{\exp(tx)} = \exp(tx)$ for all $t\in\mathbb{R}$.
\item\label{exptx_properties_positivity} $\exp(tx)\in G_{>0}$ for all $t\in\mathbb{R}_{>0}$.
\item\label{exptx_properties_diagonlize} There exist $g_x\in G$, $u_x\in U_-^{>0}$, and $y\in\mathfrak{t}$ satisfying the following properties:
\begin{itemize}[itemsep=2pt]
\item $g_x^{-1}\exp(tx)g_x = \exp(ty)$ for all $t\in\mathbb{R}$;
\item $g_x \in u_xU_+$;
\item for all $t\in\mathbb{R}_{>0}$, we have $\alpha_i(\exp(ty)) > 1$ for all $i\in I$ (so $\exp(ty)\in T_{>0}$); and
\item $\transpose{(g_x^{-1})}\in g_xT$.
\end{itemize}
\end{enumerate}
\end{proposition}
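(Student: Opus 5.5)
Part \ref{exptx_properties_transpose} is immediate from the definition of $\transpose{\cdot}$. Its differential at the identity is an involutive anti-automorphism of $\mathfrak{g}$ which, by \eqref{lie_transpose}, fixes $\mathfrak{t}$ pointwise and interchanges $e_i$ and $f_i$. Hence it fixes $x$. Since an anti-automorphism satisfies $\transpose{\exp(z)} = \exp(d\transpose{\cdot}(z))$, we get $\transpose{\exp(tx)} = \exp(tx)$ for all real $t$.

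For part \ref{exptx_properties_positivity}, I will write $\exp(tx)$ as a limit of products of positive Chevalley generators via the Lie product formula:
\[
\exp(tx) = \lim_{n\to\infty}\Bigl(\prod_{i\in I}x_i(t/n)\,y_i(t/n)\Bigr)^{n}.
\]
Each approximant lies in $G_{\ge 0}$, so $\exp(tx)\in G_{\ge 0}$, since $G_{\ge 0}$ is closed in $G$ as recalled in \cref{sec:Gbar}. To upgrade this to $G_{>0}$, I will use the one-parameter group structure $\exp(tx) = \exp(tx/N)^N$ together with Lusztig's results in \cite{lusztig94}: a product of sufficiently many elements of $G_{\ge 0}$ that involve every generator $x_i$ and $y_i$ with positive parameters lies in $G_{>0}$. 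Concretely, for $N$ large, $\exp(tx/N) = 1 + (t/N)x + O(N^{-2})$ is close to a product $\prod_i x_i(a_i)y_i(b_i)$ with positive $a_i,b_i$. This closeness must be made precise, and it is the main obstacle. I would first try to cite \cite[Section 5]{lusztig94}, which I recall states that $\exp$ of a positive combination of all $e_i,f_i$ lies in $G_{>0}$. Failing that, I would factor $\exp(tx/N)$ as $\prod_i x_i(a_i)y_i(b_i)\cdot g'$ with $g'\in G_{\ge 0}$, using Lusztig's semigroup results. Then $\exp(tx) = \exp(tx/N)^N$ contains a reduced-word-rich product of generators, and by \cite[Section 2]{lusztig94} such a product is totally positive.

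For part \ref{exptx_properties_diagonlize}, I will fix $t_0 = 1$ and apply \cref{diagonalize} to $g = \exp(x)\in G_{>0}$. This yields $g_x := h$, $u_x := u_-$, and $s\in T_{>0}$ with $\alpha_i(s)>1$. Since $\transpose{g}=g$ by part \ref{exptx_properties_transpose}, \cref{diagonalize} also gives $\transpose{(g_x^{-1})}\in g_xT$. Next, $s$ lies in $T_{>0}$, which is the image under $\exp$ of the real span of the $h_i$, so $s = \exp(y)$ for a unique real $y\in\mathfrak{t}$. The remaining issue is to show $g_x^{-1}\exp(tx)g_x = \exp(ty)$ for all $t$, and not just for $t=1$. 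Let $x' := \mathrm{Ad}(g_x^{-1})x$. Then $\exp(x') = \exp(y)$ with $y$ regular semisimple and real. Since $\exp(x')$ is regular semisimple with real positive eigenvalues on $\mathfrak{g}$, $x'$ commutes with $\exp(y)$, so $x'$ lies in the centralizer of $\exp(y)$, which is $\mathfrak{t}$ by regularity. The exponential map is injective on the real form $\mathrm{span}_{\mathbb{R}}\{h_i\}$. To conclude $x'=y$, I also need to know that $x'$ is real; I will get this by noting that $\exp(tx)\in G_{>0}$ for every $t>0$ also has real positive eigenvalues, so $\mathrm{ad}(x')$ has real eigenvalues. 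It follows that $g_x^{-1}\exp(tx)g_x = \exp(ty)$ for all $t$. Finally, $\alpha_i(\exp(ty)) = \alpha_i(s)^t > 1$ for $t>0$, and $\exp(ty)\in T_{>0}$.
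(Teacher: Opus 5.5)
Your proposal is correct and follows essentially the same route as the paper. Part (i) goes through the induced anti-automorphism of $\mathfrak{g}$ fixing $x$; part (ii) rests on Lusztig's result that $\exp$ of a positive combination of all $e_i,f_i$ lies in $G_{>0}$, which is exactly what the paper cites as \cite[Proposition 5.9(c)]{lusztig94}, so your Lie-product and factorization fallbacks are unnecessary; and part (iii) applies \cref{diagonalize} to $\exp(tx)$ at a fixed $t$. Your argument that $\mathrm{Ad}(g_x^{-1})x$ lies in $\mathfrak{t}$ and is real, hence equals $y$, simply supplies the $t$-independence check that the paper leaves to the reader.
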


\begin{proof}
\ref{exptx_properties_transpose} The anti-automorphism $\transpose{\cdot}$ on $G$ induces a linear anti-automorphism on $\mathfrak{g}$ which fixes $\mathfrak{t}$ and swaps $e_i\leftrightarrow f_i$ for all $i\in I$. This anti-automorphism on $\mathfrak{g}$ fixes $x$, so $\transpose{\cdot}$ fixes $\exp(tx)$.

\ref{exptx_properties_positivity} This is a special case of \cite[Proposition 5.9(c)]{lusztig94}.

\ref{exptx_properties_diagonlize} First let us fix $t\in\mathbb{R}_{>0}$. Then since $\exp(tx)\in G_{>0}$ and $\transpose{\exp(tx)} = \exp(tx)$, we obtain unique $g_x$, $u_x$, and $y$ with the required properties by \cref{diagonalize}. We can then check that these elements do not depend on $t$.
\end{proof}

\subsection{Construction of the contractive flow}
Fix a dominant weight $\lambda\in X(T)$ and a weight basis $v_0, v_1, \dots, v_m$ of $V_\lambda$, as in \cref{sec:Gbar}. Define
\[
A_0 := [v_0 \otimes v_0^*] \in \mathbb{P}(\End(V_\lambda)).
\]
Let $\mathbb{P}(\End(V_\lambda))_0$ denote the affine subset of $\mathbb{P}(\End(V_\lambda))$ of all elements whose coefficient of $v_0\otimes v_0^*$ is nonzero. That is, $\mathbb{P}(\End(V_\lambda))_0$ is identified with $\mathbb{C}^{(m+1)^2-1}$ as follows:
\[
\Bigg[v_0\otimes v_0^* + \sum_{\substack{0 \le i,j \le m, \\ (i,j)\neq (0,0)}}a_{i,j}v_i\otimes v_j^*\Bigg]\in\mathbb{P}(\End(V_\lambda))_0 \quad \leftrightarrow \quad (a_{i,j})_{\substack{0 \le i,j \le m, \\ (i,j)\neq (0,0)}}\in\mathbb{C}^{(m+1)^2-1}.
\]
Composing with the usual identification $\mathbb{C}\cong\mathbb{R}^2$ gives a polynomial homeomorphism $\psi_0 : \mathbb{P}(\End(V_\lambda))_0 \to \mathbb{R}^N$, where $N := 2((m+1)^2-1)$ and we endow $\mathbb{R}^N$ with the standard $2$-norm. This identifies $A_0$ with the origin of $\mathbb{R}^N$.

Let $y\in\mathfrak{t}$ be as in \cref{exptx_properties}\ref{exptx_properties_diagonlize}. We define the map $f:\mathbb{R}\times \mathbb{R}^N\to\mathbb{R}^N$ by
\begin{align}\label{flow_y}
f(t,p) := \psi_0\big((\exp(ty), \exp(ty)^{-1})\cdot\psi_0^{-1}(p)\big).
\end{align}
That is, $f$ is the flow on $\mathbb{P}(\End(V_\lambda))_0$ (identified with $\mathbb{R}^N$ via $\psi_0$) induced by the action of $(\exp(ty), \exp(ty)^{-1})\in G\times G$.
\begin{lemma}\label{Gbar_contractive_flow}
The map $f$ is a contractive flow on $\mathbb{R}^N$.
\end{lemma}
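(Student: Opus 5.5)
We compute the flow explicitly in the coordinates $a_{i,j}$ and check the three defining conditions. The element $s_t := \exp(ty)$ lies in $T$ and acts diagonally on the weight basis: $s_t v_j = \mu_j(s_t)v_j$ and $s_t v_j^* = \mu_j(s_t)^{-1}v_j^*$. By \eqref{GG_action_tensor}, the pair $(s_t, s_t^{-1})$ sends $v_i\otimes v_j^*$ to $s_t v_i\otimes s_t^{-1}v_j^* = \mu_i(s_t)\mu_j(s_t)\, v_i\otimes v_j^*$. Dividing by the coefficient $\lambda(s_t)^2$ of $v_0\otimes v_0^*$ shows that $\mathbb{P}(\End(V_\lambda))_0$ is preserved. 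In coordinates, the flow is
\[
a_{i,j}\ \longmapsto\ \frac{\mu_i(s_t)\mu_j(s_t)}{\lambda(s_t)^2}\,a_{i,j} = (\lambda-\mu_i)(s_t)^{-1}(\lambda-\mu_j)(s_t)^{-1}\,a_{i,j}.
\]
Using \eqref{weight_basis_expansion}, set $c_j := \sum_{i'\in I}c_{i',j}\,\log\alpha_{i'}(\exp(y))$ for $j\ge1$, and $c_0:=0$. Then $(\lambda-\mu_j)(s_t) = e^{tc_j}$, so each real and imaginary part of $a_{i,j}$ is multiplied by $e^{-t(c_i+c_j)}$. Here I use that $y\in\mathfrak{t}$ is real, in the sense that $\exp(ty)\in T_{>0}$ and $\alpha_{i'}(\exp(ty)) = \alpha_{i'}(\exp(y))^t$ is a positive real. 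This follows from \cref{exptx_properties}\ref{exptx_properties_diagonlize}, since $t\mapsto\alpha_{i'}(\exp(ty))$ is a homomorphism $\mathbb{R}\to\mathbb{C}^\times$ that is real and $>1$ for $t>0$.

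Conditions \ref{item:continuous} and \ref{item:action} are then immediate. The map $f$ is linear in $p$, with each coordinate scaled by a continuous function of $t$, hence it is jointly continuous. The group law holds because $t\mapsto(\exp(ty),\exp(ty)^{-1})$ is a one-parameter subgroup of $G\times G$ acting on $\mathbb{P}(\End(V_\lambda))$, and $f(0,\cdot)=\mathrm{id}$.

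For \ref{eq:contract}, the key point is that $c_i+c_j>0$ for every $(i,j)\neq(0,0)$. By \cref{exptx_properties}\ref{exptx_properties_diagonlize}, $\log\alpha_{i'}(\exp(y))>0$ for every $i'\in I$. For $j\ge1$ the coefficients $c_{i',j}\in\mathbb{N}$ are not all zero, so $c_j>0$. Together with $c_0=0$, this gives $c_i+c_j>0$ whenever $(i,j)\neq(0,0)$. Hence for $t>0$ every coordinate is multiplied by a factor strictly less than $1$. For $p\neq0$, some coordinate is nonzero, and the standard $2$-norm strictly decreases: $\|f(t,p)\|<\|p\|$.

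The only step needing care is the positivity of the exponents $c_i+c_j$. This rests on two facts: $\alpha_{i'}(\exp(ty))>1$ for all $i'$, and each non-highest weight satisfies $\lambda-\mu_j\in\sum\mathbb{N}\alpha_{i'}\setminus\{0\}$. Everything else is bookkeeping of the torus action on the weight basis.
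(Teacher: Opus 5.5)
Your proof is correct and follows the paper's argument: you show that $(\exp(ty),\exp(ty)^{-1})$ rescales each coordinate $a_{i,j}$ by $\mu_i(\exp(ty))\mu_j(\exp(ty))/\lambda(\exp(ty))^2$, and that this factor is a positive real strictly less than $1$ for $t>0$ and $(i,j)\neq(0,0)$, using the expansion \eqref{weight_basis_expansion} and $\alpha_i(\exp(y))>1$. Your additional remarks make explicit details the paper leaves implicit: the chart $\mathbb{P}(\End(V_\lambda))_0$ is preserved, $\alpha_{i'}(\exp(ty))=\alpha_{i'}(\exp(y))^t$ is a positive real, and the strict decrease of the norm comes from a nonzero coordinate.
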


\begin{proof}
We can verify directly that $f$ satisfies conditions \ref{item:continuous} and \ref{item:action} of being a contractive flow. It remains to check condition \ref{eq:contract}. That is, given $p\in\mathbb{R}^N\setminus\{0\}$ and $t > 0$, we must show that $\|f(t,p)\| < \|p\|$.

Write
\[
\psi_0^{-1}(p) = \Bigg[v_0\otimes v_0^* + \sum_{\substack{0 \le i,j \le m, \\ (i,j)\neq (0,0)}}a_{i,j}v_i\otimes v_j^*\Bigg].
\]
Note that for all $h\in T$ and $i\in I$, we have $hv_i = \mu_i(h)v_i$ and $h^{-1}v_i^* = \mu_i(h)v_j^*$. Using this along with \eqref{GG_action_tensor}, we calculate that
\[
(\exp(ty), \exp(ty)^{-1})\cdot\psi_0^{-1}(p) = \Bigg[v_0\otimes v_0^* + \sum_{\substack{0 \le i,j \le m, \\ (i,j)\neq (0,0)}}a_{i,j}\frac{\mu_i(\exp(ty))}{\lambda(\exp(ty))}\frac{\mu_j(\exp(ty))}{\lambda(\exp(ty))}v_i\otimes v_j^*\Bigg].
\]
Now by \eqref{weight_basis_expansion}, we have
\[
\frac{\mu_j(\exp(ty))}{\lambda(\exp(ty))} = \prod_{i\in I}\alpha_i(\exp(y))^{-tc_{i,j}} \quad \text{ for all $1 \le j \le m$}.
\]
Recall that every $c_{i,j}$ is nonnegative and at least one $c_{i,j}$ is nonzero. Also, $\alpha_i(\exp(y)) > 1$ for all $i\in I$ by \cref{exptx_properties}\ref{exptx_properties_diagonlize}. Therefore $\left|\frac{\mu_j(\exp(ty))}{\lambda(\exp(ty))}\right| < 1$. This implies that
\[
\left|a_{i,j}\frac{\mu_i(\exp(ty))}{\lambda(\exp(ty))}\frac{\mu_j(\exp(ty))}{\lambda(\exp(ty))}\right| < |a_{i,j}| \quad \text{ for all $0 \le i,j \le m$ with $(i,j)\neq (0,0)$}.
\]
We obtain $\|f(t,p)\| < \|p\|$, as desired.
\end{proof}

We are really interested in the flow generated by $x$, not $y$. To this end, let $g_x$ be as in \cref{exptx_properties}\ref{exptx_properties_diagonlize}. Let $\phi$ be the automorphism of $\mathbb{P}(\End(V_\lambda))$ given by the action of $(g_x,g_x)\in G\times G$, so that $\phi^{-1}$ is the action of $(g_x^{-1}, g_x^{-1})$. Then the map
\[
\psi := \psi_0\circ\phi^{-1} : (g_x,g_x)\cdot \mathbb{P}(\End(V_\lambda))_0 \to \mathbb{R}^N
\]
identifies $(g_x,g_x)\cdot \mathbb{P}(\End(V_\lambda))_0$ with $\mathbb{R}^N$. Note that the element
\[
A_\att := \phi(A_0) = (g_x, g_x) \cdot A_0 \in (g_x,g_x)\cdot \mathbb{P}(\End(V_\lambda))_0
\]
is identified with the origin $0\in\mathbb{R}^N$ under $\psi$.
\begin{proposition}\label{flow_x}
We can write the flow $f:\mathbb{R}\times \mathbb{R}^N\to\mathbb{R}^N$ from \eqref{flow_y} as
\[
f(t,p) = \psi\big((\exp(tx), \exp(tx)^{-1})\cdot\psi^{-1}(p)\big).
\]
That is, $f$ is the flow on $(g_x,g_x)\cdot \mathbb{P}(\End(V_\lambda))_0$ (identified with $\mathbb{R}^N$ via $\psi$) induced by the action of $(\exp(tx), \exp(tx)^{-1})\in G\times G$.
\end{proposition}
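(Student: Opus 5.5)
This is a direct computation: substitute $\psi = \psi_0\circ\phi^{-1}$ and use the conjugation identity from \cref{exptx_properties}\ref{exptx_properties_diagonlize}. Concretely, the claim is that for every $p\in\mathbb{R}^N$,
\[
\psi_0\big(\phi^{-1}\big((\exp(tx),\exp(tx)^{-1})\cdot\phi(\psi_0^{-1}(p))\big)\big) = \psi_0\big((\exp(ty),\exp(ty)^{-1})\cdot\psi_0^{-1}(p)\big),
\]
since $\psi^{-1} = \phi\circ\psi_0^{-1}$. It therefore suffices to prove the identity of automorphisms of $\mathbb{P}(\End(V_\lambda))$
\[
\phi^{-1}\circ(\exp(tx),\exp(tx)^{-1})\circ\phi = (\exp(ty),\exp(ty)^{-1}).
\]
We must also check that the left-hand flow preserves the domain $(g_x,g_x)\cdot\mathbb{P}(\End(V_\lambda))_0$. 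This will follow from the identity, because $(\exp(ty),\exp(ty)^{-1})$ preserves $\mathbb{P}(\End(V_\lambda))_0$ (it rescales each coordinate $a_{i,j}$, as computed in the proof of \cref{Gbar_contractive_flow}).

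To prove the identity, compose in $G\times G$. Since $\phi$ is the action of $(g_x,g_x)$ and the action is a group action, the left-hand side is the action of
\[
(g_x^{-1},g_x^{-1})(\exp(tx),\exp(tx)^{-1})(g_x,g_x) = \big(g_x^{-1}\exp(tx)g_x,\; g_x^{-1}\exp(tx)^{-1}g_x\big).
\]
By the first bullet of \cref{exptx_properties}\ref{exptx_properties_diagonlize}, the first component is $\exp(ty)$. The second component is $(g_x^{-1}\exp(tx)g_x)^{-1} = \exp(ty)^{-1}$. Hence the element equals $(\exp(ty),\exp(ty)^{-1})$, which proves the identity and the proposition.

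There is no real obstacle here. The only points needing care are bookkeeping: the order of composition ($\psi^{-1} = \phi\circ\psi_0^{-1}$), and the fact that the conjugation identity holds for all $t\in\mathbb{R}$, not only $t>0$. The latter is guaranteed by the statement of \cref{exptx_properties}\ref{exptx_properties_diagonlize}, and in any case follows from the $t>0$ case by taking inverses.
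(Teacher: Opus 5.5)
Your proposal is correct and follows the paper's approach: the paper's proof is the same direct calculation, reducing to the conjugation identity $(g_x^{-1},g_x^{-1})(\exp(tx),\exp(tx)^{-1})(g_x,g_x) = (\exp(ty),\exp(ty)^{-1})$ in $G\times G$ from \cref{exptx_properties}\ref{exptx_properties_diagonlize}. Your added bookkeeping ($\psi^{-1} = \phi\circ\psi_0^{-1}$, that the action is a left action, and that the domain $(g_x,g_x)\cdot\mathbb{P}(\End(V_\lambda))_0$ is preserved) just makes explicit what the paper leaves implicit.
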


\begin{proof}
This follows by a direct calculation using the fact that \[
(g_x^{-1},g_x^{-1})(\exp(tx), \exp(tx)^{-1})(g_x,g_x) = (\exp(ty),\exp(ty)^{-1}) \quad \text{ in } G\times G,
\]
from \cref{exptx_properties}\ref{exptx_properties_diagonlize}.
\end{proof}

The next step is to show that $\overline{G_{\ge 0}}$ is identified with a nonempty compact subset of $\mathbb{R}^N$. To this end, for the remainder of this section we fix $\lambda\in X(T)$ to be a dominant regular weight. (We mention that $A_0$ is equal to the point $z_\varnothing^\circ\in Z_\varnothing$.)

\begin{lemma}\label{nonnegative_to_positive}
Let $g_1\in U_+^{>0}$ and $g_2\in U_-^{>0}$. Then
\[
(g_1,g_2^{-1})\cdot \overline{G_{\ge 0}} \subseteq \mathbb{P}(\End(V_\lambda))_0.
\]
\end{lemma}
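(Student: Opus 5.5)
The plan is to reduce the statement to a positivity computation for matrix coefficients in a canonical basis. First I treat the simply-laced case, and then I pass to general $G$ by folding.

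\emph{Simply-laced case.} By Lusztig \cite{lusztig90,lusztig94}, $V_\lambda$ has a canonical basis $v_0,\dots,v_m$. It is a weight basis, and $v_0$ is the highest weight vector. I take this as the weight basis defining $\mathbb{P}(\End(V_\lambda))_0$; this is harmless, since the condition ``coefficient of $v_0\otimes v_0^*$ is nonzero'' only depends on $v_0$ and on the hyperplane spanned by the other weight spaces. In this basis, every $x_i(t)$ and $y_i(t)$ with $t>0$, and every element of $T_{>0}$, acts by a matrix with nonnegative entries. Hence every $g\in G_{\ge 0}$ acts by a nonnegative matrix. Given $[A]\in\overline{G_{\ge 0}}$, I write it as a limit of $[g_n]$ with $g_n\in G_{\ge 0}$. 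I normalize each $g_n$ so that its entries sum to $1$ and pass to a convergent subsequence. This lets me represent $[A]$ by a nonzero matrix $A$ with nonnegative entries.

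\emph{Key positivity step.} The coefficient of $v_0\otimes v_0^*$ in $(g_1,g_2^{-1})\cdot A = g_1Ag_2$ is
\[
v_0^*(g_1Ag_2v_0) = \sum_{k,l} (g_1)_{0k}\,A_{kl}\,(g_2)_{l0}.
\]
It therefore suffices to show two things. First, $g_2v_0$ has strictly positive coefficient on every $v_l$ when $g_2\in U_-^{>0}$. Second, by applying $\transpose{\cdot}$, for $g_1\in U_+^{>0}$ the row $v_0^*\circ g_1$ is strictly positive. For the first claim, write $g_2 = y_{i_1}(t_1)\cdots y_{i_\ell}(t_\ell)$ for a reduced word of $w_0$ with all $t_j>0$. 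Expanding gives a nonnegative combination of monomials $f_{i_1}^{(a_1)}\cdots f_{i_\ell}^{(a_\ell)}v_0$, each of which is a nonnegative combination of canonical basis elements. Every canonical basis element of $V_\lambda$ must occur with positive coefficient. I would justify this by Lusztig's result that such products, taken over a reduced word of $w_0$, span (in fact positively generate) $V_\lambda$ from $v_0$. Alternatively, I would argue that $g_2v_0$ cannot have a zero coordinate, since $U_-^{>0}$ is open in $U_-$ and the nonzero coordinates form a semigroup-stable set containing all of the support of $U_-v_0 = $ the open cell. Once both claims hold, the sum above is positive because $A\ge0$ and $A\ne0$.

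\emph{General case.} By \cref{folding_U_containment}, $g_1\in\dot U_+^{>0}$ and $g_2\in\dot U_-^{>0}$. Similarly $G_{\ge 0}\subseteq\dot G_{\ge 0}$ by \eqref{unfolding_formulas}. I choose the regular dominant weight of $\dot G$ to be $\sigma$-invariant and restricting to $\lambda$. Then $V_\lambda$ is the $G$-submodule of $V_{\dot\lambda}$ generated by $v_0$. An element of $\overline{G_{\ge0}}$ is the restriction of a nonnegative matrix on $V_{\dot\lambda}$ obtained as above. The coefficient in question is still $v_0^*(g_1Ag_2v_0)$, which is positive by the simply-laced argument.

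I expect the main obstacle to be the strict positivity of every coordinate of $g_2v_0$ in the canonical basis. I would first try to cite Lusztig directly for this, and otherwise argue via a reduced word of $w_0$ as above.
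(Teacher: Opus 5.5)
Your proposal is correct in substance and follows the paper's route. You use the canonical basis in the simply-laced case, give points of $\overline{G_{\ge 0}}$ nonnegative representatives, reduce to strict positivity of the $v_0$-column of $g_2$ and the $v_0$-row of $g_1$, and then fold through $V_\lambda\subseteq V_{\dot\lambda}$. The paper quotes \cite[Proposition~3.2(a)]{lusztig94} for the nonnegative representatives; you reprove this by normalizing and passing to a limit.

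There are two differences from the paper. First, you prove the strict positivity yourself rather than citing \cite{barkley_karp}. Take a reduced word $(i_1,\dots,i_d)$ of $w_0$. The $v_l$-coefficient of $y_{i_1}(t_1)\cdots y_{i_d}(t_d)v_0$ is a polynomial in the $t_j$ with nonnegative coefficients. It is not identically zero because the image of $(t_j)\mapsto y_{i_1}(t_1)\cdots y_{i_d}(t_d)$ is Zariski dense in $U_-$ and $U_-v_0$ spans $V_\lambda$. This is the clean justification of your spanning claim, and the ``semigroup-stable'' alternative can be dropped. Second, in the folding step you lift limits directly to nonnegative matrices on $V_{\dot\lambda}$. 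The paper instead invokes the De~Concini--Procesi lemma that the closure of $G$ in $\mathbb{P}(\End(V_{\dot\lambda}))$ is again $\overline{G}$.

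Three small repairs are needed.

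First, $\transpose{\cdot}$ is not matrix transposition in the canonical basis, so the row claim for $g_1$ does not follow by literally transposing. For example, take $\mathfrak{sl}_2$ on its adjoint representation with canonical basis $v_0, fv_0, f^{(2)}v_0$. The coefficient of $fv_0$ in $e f^{(2)}v_0$ is $1$, while the coefficient of $f^{(2)}v_0$ in $f\cdot fv_0$ is $2$. The row claim does follow by rerunning your column argument in $V_\lambda^*$ with basis $v_0^*,\dots,v_m^*$ and the action $g\cdot\xi:=\xi\circ\transpose{g}$. In this module $v_0^*$ is a highest weight vector of an irreducible module, and $f_i^{(a)}$ acts by the transpose of the nonnegative matrix of $e_i^{(a)}$. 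Moreover $v_0^*\circ g_1=\transpose{g_1}\cdot v_0^*$ with $\transpose{g_1}\in U_-^{>0}$.

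Second, in the folding step you must check that $\tilde{A}|_{V_\lambda}\neq 0$ before saying the limit $\tilde{A}$ on $V_{\dot\lambda}$ restricts to a representative of $[A]$. This follows from your own computation: $\tilde{A}$ preserves $V_\lambda$, $g_2v_0\in V_\lambda$, and $v_0^*(g_1\tilde{A}g_2v_0)>0$.

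Third, do not ask for $\dot\lambda$ to be regular or $\sigma$-invariant; any dominant $\dot\lambda$ with $\dot\lambda|_T=\lambda$ suffices. A $\sigma$-invariant $\dot\lambda$ need not exist. Indeed, since $\alpha_i^\vee=\prod_{j\in i}\dot\alpha_j^\vee$, such a $\dot\lambda$ has $\langle\dot\lambda|_T,\alpha_i^\vee\rangle=\sum_{j\in i}\langle\dot\lambda,\dot\alpha_j^\vee\rangle$ divisible by the orbit size $|i|$. This fails, e.g.\ for $G$ of type $G_2$ folded from $D_4$ with $\lambda=\rho$.
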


We first prove \cref{nonnegative_to_positive} in the simply-laced case, and then prove it in general via folding.
\begin{proof}[Proof of \cref{nonnegative_to_positive} in the simply-laced case]
We adopt the setup of \cref{sec:Gbar}, and additionally assume that $v_0, \dots, v_m$ is the canonical basis of $V_\lambda$ (where $G$ is simply-laced). Let $\mathbb{P}(\End(V_\lambda))_{\ge 0}$ denote the subset of $\mathbb{P}(\End(V_\lambda))$ of elements of the form $[\sum_{0 \le i,j \le m}a_{i,j}v_i\otimes v_j^*]$ such that $a_{i,j} \ge 0$ for all $0 \le i,j \le m$. By \cite[Proposition 3.2(a)]{lusztig94} (applied to the simply connected cover of $G$), we have $\overline{G_{\ge 0}} \subseteq \mathbb{P}(\End(V_\lambda))_{\ge 0}$. Thus it suffices to show that
\[
(g_1,g_2^{-1})\cdot \mathbb{P}(\End(V_\lambda))_{\ge 0} \subseteq \mathbb{P}(\End(V_\lambda))_0.
\]

We do so by showing that for all $0 \le i,j \le m$, the coefficient of $v_0\otimes v_0^*$ in the basis expansion of
\[
(g_1,g_2^{-1})\cdot v_i\otimes v_j^*
\]
is positive. It suffices to check that the coefficient of $v_0$ in $g_1v_i$ is positive and the coefficient of $v_0^*$ in $g_2^{-1}v_j^*$ is positive. This follows from \cite[Lemma 5.10 and proof of Proposition 5.9]{barkley_karp}, since every $e_i$ (for $i\in I$) acts on the canonical basis with nonnegative structure constants.
\end{proof}

\begin{proof}[Proof of \cref{nonnegative_to_positive} in general, via folding]
We adopt the setup of \cref{sec:folding}. There exists a dominant weight $\dot{\lambda}\in X(\dot{T})$ such that $\lambda = \dot{\lambda}|_T\in X(T)$, so that we may regard $V_\lambda$ as a subspace of $V_{\dot{\lambda}}$. Since the highest weight spaces of $V_\lambda$ and $V_{\dot{\lambda}}$ are both spanned by $v_0$, we have
\[
\mathbb{P}(\End(V_\lambda))_0 = \mathbb{P}(\End(V_{\dot{\lambda}}))_0\cap\mathbb{P}(\End(V_\lambda)).
\]
Also, since $V_\lambda \subseteq V_{\dot{\lambda}}$, the Zariski closure of $G$ in $\mathbb{P}(\End(V_{\dot{\lambda}}))$ is the wonderful compactification of $G$ as a consequence of \cite[Lemma in Section 4]{de_concini_procesi83}.  Therefore it suffices to show that
\begin{align}\label{folding_to_prove}
(g_1,g_2^{-1})\cdot \overline{G_{\ge 0}} \subseteq \mathbb{P}(\End(V_{\dot{\lambda}}))_0.
\end{align}
By \cref{folding_U_containment}, we have $g_1\in\dot{U}_+^{>0}$ and $g_2\in\dot{U}_-^{>0}$. From the definitions, we have $G_{\ge 0} \subseteq \dot{G}_{\ge 0}$, so $\overline{G_{\ge 0}} \subseteq \overline{\dot{G}_{\ge 0}}$. Thus \eqref{folding_to_prove} follows from \cref{nonnegative_to_positive} applied to $\dot{G}$ (which we have already proved).
\end{proof}

\begin{lemma}\label{positive_nonzero_coefficient}
We have $\overline{G_{\ge 0}} \subseteq (g_x, g_x)\cdot \mathbb{P}(\End(V_\lambda))_0$.
\end{lemma}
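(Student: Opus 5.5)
The plan is to reduce \cref{positive_nonzero_coefficient} to \cref{nonnegative_to_positive}, using the decomposition of $g_x$ from \cref{exptx_properties}\ref{exptx_properties_diagonlize}. Concretely, we must show that for every $z\in\overline{G_{\ge 0}}$ the point $(g_x^{-1},g_x^{-1})\cdot z = g_x^{-1}zg_x$ lies in $\mathbb{P}(\End(V_\lambda))_0$. That is, we need the coefficient $\langle v_0^*, g_x^{-1}Zg_x v_0\rangle$ to be nonzero for any representative $Z\in\End(V_\lambda)$ of $z$.

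\emph{Step 1: an invariance property of $\mathbb{P}(\End(V_\lambda))_0$.} For $a\in B_-$, $b\in B_+$ and $A\in\End(V_\lambda)$, we have
\[
\langle v_0^*, aAbv_0\rangle = c\,\langle v_0^*, Av_0\rangle
\]
for some nonzero scalar $c$ depending only on $a$ and $b$. This holds because $U_+$ fixes $v_0$ and $T$ rescales it. Dually, $U_-$ only lowers weights, so $v_0^*\circ u_- = v_0^*$ for $u_-\in U_-$, while $T$ rescales $v_0^*$. Consequently the action of $(a,b^{-1})$ preserves $\mathbb{P}(\End(V_\lambda))_0$ and its complement.

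\emph{Step 2: rewriting $g_x$ and $g_x^{-1}$.} Write $g_x = u_xu_+$ with $u_x\in U_-^{>0}$ and $u_+\in U_+$. Then the right-hand factor $g_xv_0$ equals $u_xu_+v_0 = u_xv_0$, so $u_+$ can be dropped. For the left-hand factor, use $\transpose{(g_x^{-1})}\in g_xT$, say $\transpose{(g_x^{-1})} = g_x\tau$ with $\tau\in T$. Applying $\transpose{\cdot}$ gives
\[
g_x^{-1} = \tau\,\transpose{g_x} = \tau\,\transpose{u_+}\,\transpose{u_x}.
\]
Here $\tau\,\transpose{u_+}\in B_-$ and $\transpose{u_x}\in U_+^{>0}$, since $\transpose{\cdot}$ swaps $U_\pm$ and maps $U_-^{>0}$ onto $U_+^{>0}$. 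By Step 1, the factor $\tau\,\transpose{u_+}$ can be discarded. Hence $g_x^{-1}zg_x\in\mathbb{P}(\End(V_\lambda))_0$ if and only if $\transpose{u_x}\,z\,u_x\in\mathbb{P}(\End(V_\lambda))_0$.

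\emph{Step 3: applying \cref{nonnegative_to_positive}.} Set $g_1 := \transpose{u_x}\in U_+^{>0}$ and $g_2 := u_x\in U_-^{>0}$. Then $\transpose{u_x}\,z\,u_x = (g_1,g_2^{-1})\cdot z$, and this lies in $\mathbb{P}(\End(V_\lambda))_0$ by \cref{nonnegative_to_positive}.

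The only delicate point is the bookkeeping in Step 2. One must check that the transpose relation puts the non-positive part of $g_x^{-1}$ on the correct side, as an element of $B_-$ acting on the left, where Step 1 shows it is harmless. One must also confirm that $\transpose{\cdot}$ sends $U_-^{>0}$ to $U_+^{>0}$, which holds by definition.
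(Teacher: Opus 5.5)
Your proposal is correct and matches the paper's proof. The paper likewise writes $g_x^{-1} = s\,\transpose{u_+}\,\transpose{u_x}$ using $\transpose{(g_x^{-1})}\in g_xT$, factors $(g_x^{-1},g_x^{-1}) = (s\transpose{u_+},u_+^{-1})(\transpose{u_x},u_x^{-1})$, discards the first factor because $B_-\times B_+$ preserves $\mathbb{P}(\End(V_\lambda))_0$, and then applies \cref{nonnegative_to_positive} with $g_1=\transpose{u_x}$ and $g_2=u_x$; your Step 1 simply spells out, via the matrix coefficient $\langle v_0^*,\,\cdot\,v_0\rangle$, the invariance that the paper states without proof.
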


\begin{proof}
Equivalently, we must show that $(g_x^{-1}, g_x^{-1})\cdot \overline{G_{\ge 0}} \subseteq \mathbb{P}(\End(V_\lambda))_0$. To this end, by \cref{exptx_properties}\ref{exptx_properties_diagonlize} we can write $g_x = u_xu_+$ for some $u_+\in U_+$ and $\transpose{(g_x^{-1})} = g_xs$ for some $s\in T$, whence
\[
g_x^{-1} = s\transposeminus{g_x} = s\transposeminus{u_+}\transposeminus{u_x}.
\]
Therefore
\[
(g_x^{-1}, g_x^{-1}) = (s\transposeminus{u_+}, u_+^{-1})(\transposeminus{u_x},u_x^{-1}).
\]
Note that $(s\transposeminus{u_+}, u_+^{-1})\in B_-\times B_+$, and $B_-\times B_+$ preserves $\mathbb{P}(\End(V_\lambda))_0$. Thus it suffices to show that $(\transposeminus{u_x},u_x^{-1})\cdot \overline{G_{\ge 0}} \subseteq \mathbb{P}(\End(V_\lambda))_0$. This follows from \cref{nonnegative_to_positive} by taking $g_1 = \transposeminus{u_x} \in U_+^{>0}$ and $g_2 = u_x \in U_-^{>0}$.
\end{proof}

\subsection{Topological preliminaries}
Recall from \cref{sec:Gbar} that $\overline{Z_J^{\ge 0}}$ (for $J\subseteq I$) denotes the Euclidean closure of $Z_J^{\ge 0}$ (or $Z_J^{>0}$, by \cref{he_ZJ}\ref{he_ZJ_containment}) inside $\overline{G_{\ge 0}}$, and that $\overline{Z_J}$ denotes the Zariski closure of $Z_J$. In this subsection we establish several topological properties of $\overline{Z_J^{\ge 0}}$ which are required in order to apply \cref{flow_homeomorphism} to $\overline{Z_J^{\ge 0}}$.
\begin{proposition}\label{stratum_closure_description}
For all $J\subseteq I$, we have
\[
\overline{Z_J^{\ge 0}} = \bigsqcup_{J'\subseteq J}Z_{J'}^{\ge 0} = \overline{Z_J}\cap \overline{G_{\ge 0}}.
\]
\end{proposition}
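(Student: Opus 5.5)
The plan is to prove the chain of containments
\[
\overline{Z_J^{\ge 0}} \subseteq \overline{Z_J}\cap\overline{G_{\ge 0}} = \bigsqcup_{J'\subseteq J}Z_{J'}^{\ge 0} \subseteq \overline{Z_J^{\ge 0}}.
\]
The first containment is immediate: $\overline{Z_J^{\ge 0}}$ is a Euclidean closure of a subset of $Z_J\cap\overline{G_{\ge 0}}$, and both $\overline{Z_J}$ (Zariski closed, hence Euclidean closed) and $\overline{G_{\ge 0}}$ are closed. The middle equality follows from \cref{ZJ_closure}: intersecting $\overline{Z_J}=\bigsqcup_{J'\subseteq J}Z_{J'}$ with $\overline{G_{\ge 0}}$ gives $\bigsqcup_{J'\subseteq J}(Z_{J'}\cap\overline{G_{\ge 0}})$, which by definition is $\bigsqcup_{J'\subseteq J}Z_{J'}^{\ge 0}$.

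The real content is the last containment. I need that $Z_{J'}^{\ge 0}\subseteq\overline{Z_J^{\ge 0}}$ for every $J'\subseteq J$. I would reduce this in two steps.

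\textbf{Step 1: reduce to the positive part.} By \cref{he_ZJ}\ref{he_ZJ_containment}, $Z_{J'}^{\ge 0}$ lies in the closure of $Z_{J'}^{>0}$. To see this, take $p\in Z_{J'}^{\ge 0}$ and $g_n=\exp(x/n)\in G_{>0}$, which is possible by \cref{exptx_properties}\ref{exptx_properties_positivity}. Then $(g_n,g_n^{-1})\cdot p\in Z_{J'}^{>0}$ and converges to $p$. So it suffices to show $Z_{J'}^{>0}\subseteq\overline{Z_J^{\ge 0}}$.

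\textbf{Step 2: limits from $Z_J$ to $Z_{J'}$.} Write an element of $Z_{J'}^{>0}$ as $(g_1,g_2^{-1})\cdot z_{J'}^\circ$ with $g_1,g_2\in G_{>0}$. Since the $G\times G$ action is continuous and preserves total nonnegativity for $g_1,g_2\in G_{>0}$ (which map $\overline{G_{\ge0}}$ into itself as a limit of monoid multiplication on $G_{\ge0}$), it suffices to show $z_{J'}^\circ\in\overline{Z_J^{\ge 0}}$. By definition, $z_{J'}^\circ$ is the limit of $h\in T$ with $\alpha_i(h)=1$ for $i\in J'$ and $\alpha_i(h)\to\infty$ for $i\notin J'$. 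Choose $h_n\in T_{>0}$ with $\alpha_i(h_n)=1$ for $i\in J'$, $\alpha_i(h_n)=n$ for $i\in J\setminus J'$, and $\alpha_i(h_n)=n^2$ for $i\notin J$. The element $z_J^\circ$ is the limit as $s\to\infty$ of $h_n k_s$, where $k_s\in T_{>0}$ has $\alpha_i(k_s)=1$ on $J$ and $\alpha_i(k_s)=s$ off $J$. Hence $h_n\cdot z_J^\circ=\lim_s h_nk_s\in\overline{G_{\ge0}}\cap Z_J=Z_J^{\ge0}$, since the $T$-action on $z_J^\circ$ stays in the orbit $Z_J$.

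It remains to check that $h_n\cdot z_J^\circ\to z_{J'}^\circ$. I would do this in coordinates on $\overline{T}\subseteq\mathbb{P}(\End(V_\lambda))$. The point $z_J^\circ$ is $[\sum_{j}v_j\otimes v_j^*]$, summed over weights $\mu_j$ with $c_{i,j}=0$ for all $i\notin J$. Acting by $h_n$ and normalizing by $\lambda(h_n)$ multiplies the coefficient of $v_j\otimes v_j^*$ by $\prod_i\alpha_i(h_n)^{-c_{i,j}}$. This factor is $1$ if $\lambda-\mu_j$ is supported on $J'$, and tends to $0$ otherwise. The limit is therefore exactly the sum over weights supported on $J'$, which is $z_{J'}^\circ$.

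I expect the main obstacle to be this explicit description of $z_J^\circ$ as a diagonal sum of weight-vector tensors. It is presumably standard (it is how the limit in the definition is computed), but it has to be justified from the definition via the same coefficient computation as in \cref{Gbar_contractive_flow}. A secondary point to check carefully is that $\overline{G_{\ge 0}}$ is stable under the action of $(g_1,g_2^{-1})$ for $g_1,g_2\in G_{\ge0}$; this follows by continuity from the monoid property of $G_{\ge 0}$.
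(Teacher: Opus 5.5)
Your proposal is correct and follows the paper's proof. Both arguments get the $\subseteq$ containment and the second equality from \cref{ZJ_closure}, obtain $z_{J'}^\circ\in\overline{Z_J^{\ge 0}}$ as a limit of $(s,e)\cdot z_J^\circ$ with $s\in T_{>0}$, and then pass to all of $Z_{J'}^{\ge 0}$ using \cref{he_ZJ}\ref{he_ZJ_containment}. You spell out what the paper leaves implicit: approximating $Z_{J'}^{\ge 0}$ by $Z_{J'}^{>0}$ via $\exp(x/n)$, the $G_{>0}\times G_{>0}$-stability of $\overline{Z_J^{\ge 0}}$, and the coordinate computation of the limit.
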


\begin{proof}
Let $J\subseteq I$. By \cref{ZJ_closure}, we obtain both the second equality and the $\subseteq$ containment of the first equality. It remains to prove the $\supseteq$ containment, i.e., that $Z_{J'}^{\ge 0}\subseteq \overline{Z_J^{\ge 0}}$ for all $J'\subseteq J$. Since $T_{>0}$ is contained in the Euclidean closure of $G_{>0}$, we have $(s,e)\cdot z_J^\circ \in \overline{Z_J^{\ge 0}}$ for all $s\in T_{>0}$. Note that the limit of $(s,e)\cdot z_J^\circ$ equals $z_{J'}^\circ$, where $\alpha_i(s) = 1$ for all $i\in J'$ and $\alpha_i(g)\to\infty$ for all $i\notin J'$. In particular, we have $z_{J'}^\circ\in \overline{Z_J^{\ge 0}}$. This implies that $Z_{J'}^{\ge 0} \subseteq \overline{Z_J^{\ge 0}}$ by \cref{he_ZJ}\ref{he_ZJ_containment}, as desired.
\end{proof}

\begin{lemma}\label{Kprime_description_lemma}
For all $J\subseteq I$, we have
\[
\bigcup_{t>0}(\exp(tx), \exp(tx)^{-1})\cdot Z_J^{\ge 0} = Z_J^{>0}.
\]
\end{lemma}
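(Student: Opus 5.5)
The plan is to prove the two containments separately. Both rest on \cref{he_ZJ} and \cref{exptx_properties}\ref{exptx_properties_positivity}, which says $\exp(tx)\in G_{>0}$ for $t>0$.

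For $\subseteq$: fix $t>0$ and set $g := \exp(tx)\in G_{>0}$. The element $(\exp(tx),\exp(tx)^{-1})$ has the form $(g_1,g_2^{-1})$ with $g_1 = g_2 = g \in G_{>0}$. Then \cref{he_ZJ}\ref{he_ZJ_containment} gives $(g,g^{-1})\cdot Z_J^{\ge 0}\subseteq Z_J^{>0}$ directly, and taking the union over $t>0$ gives the containment.

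For $\supseteq$: let $z\in Z_J^{>0}$. I want to write $z = (\exp(tx),\exp(tx)^{-1})\cdot z'$ for some $t>0$ and $z'\in Z_J^{\ge 0}$. The natural candidate is $z' := (\exp(-tx),\exp(tx))\cdot z$, with $t>0$ small. By definition $z = (g_1,g_2^{-1})\cdot z_J^\circ$ with $g_1,g_2\in G_{>0}$, so
\[
z' = (\exp(-tx)g_1, (g_2\exp(-tx))^{-1})\cdot z_J^\circ .
\]
It therefore suffices to show that $\exp(-tx)g_1$ and $g_2\exp(-tx)$ lie in $G_{>0}$ for $t>0$ small. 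Then $z'\in Z_J^{>0}\subseteq Z_J^{\ge 0}$; the last containment holds because $Z_J^{>0}$ lies in the closure of $G_{>0}$ acting on $z_J^\circ$, and $z_J^\circ\in Z_J^{\ge 0}$.

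The key fact needed is that $G_{>0}$ is open in $G(\mathbb{R})$. This is in Lusztig \cite{lusztig94}: $G_{>0}$ is a connected component of the open subset of $G(\mathbb{R})$ where suitable generalized minors are nonzero, or equivalently, its parametrization $U_-^{>0}\times T_{>0}\times U_+^{>0}$ is a homeomorphism onto an open set. Granting openness, $\exp(-tx)g_1\to g_1\in G_{>0}$ as $t\to 0^+$, so it lies in $G_{>0}$ for small $t$, and similarly for $g_2\exp(-tx)$. Choosing one $t>0$ that works for both finishes the argument.

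The main obstacle is justifying this openness cleanly from citable results. If a direct citation is awkward, I would use the alternative parametrization in \cref{he_ZJ}\ref{he_ZJ_paramtrization}: $z = (u_-h,u_+^{-1})\cdot z_J^\circ$. In that form it suffices that $\exp(-tx)u_-h\in G_{>0}$ and $u_+\exp(-tx)\in G_{>0}$ for small $t$, which again reduces to openness, now of $U_-^{>0}T_{>0}U_+^{>0}$ inside $G(\mathbb{R})$. This is the standard statement \cite[Section 4]{lusztig94} that $G_{>0}$ is open in $G(\mathbb{R})$.
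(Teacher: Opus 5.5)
Your proof is correct and is essentially the paper's argument. The containment $\subseteq$ is \cref{he_ZJ}\ref{he_ZJ_containment} with $g_1=g_2=\exp(tx)$, and $\supseteq$ comes from flowing a point of $Z_J^{>0}$ backward for a small time $t>0$ and using openness to remain in $Z_J^{>0}\subseteq Z_J^{\ge 0}$. The only difference is where openness is invoked: the paper uses that $Z_J^{>0}$ is open in the real part of $Z_J$, whereas you use that $G_{>0}$ is open in $G(\mathbb{R})$ and perturb the group elements $g_1,g_2$ directly. Your version is slightly more self-contained, since it avoids any analysis of the orbit $Z_J$ itself.
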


\begin{proof}
The $\subseteq$ containment follows from \cref{he_ZJ}\ref{he_ZJ_containment}, since $\exp(tx)\in G_{>0}$ by \cref{exptx_properties}\ref{exptx_properties_positivity}. For the $\supseteq$ containment, let $A\in Z_J^{>0}$. Since $Z_J^{>0}$ is open in the real part of $Z_J$, there exists $t>0$ such that $(\exp(tx)^{-1},\exp(tx))\cdot A\in Z_J^{>0}$. Then $A\in (\exp(tx)^{-1},\exp(tx))\cdot Z_J^{\ge 0}$, as desired.
\end{proof}

Recall from \cref{sec:Gbar} that $v_0$ is the unique basis vector of $V_\lambda$ of highest weight $\lambda$, where $\lambda$ is a dominant regular weight. We assume that $I = \{1, \dots, \ell\}$, and we reorder the remaining basis vectors of $V_\lambda$ so that $v_i$ is the unique basis vector of weight $\lambda - \alpha_i$ for all $i\in [\ell]$.

We now recall some background from \cite[Sections 2.2 and 2.4]{evens_jones08}. Let $Y$ denote the Zariski closure of $T$ inside $\mathbb{P}(\End(V_\lambda))_0$. Then we have the isomorphism $F : \mathbb{C}^\ell\to Y$ given by
\begin{align}\label{torus_lambda_isomorphism}
F(y_1, \dots, y_r) := \Bigg[v_0\otimes v_0^* + \sum_{i=1}^ry_iv_i\otimes v_i^* + \sum_{i=\ell+1}^m\bigg(\prod_{i'=1}^\ell y_i^{c_{i',i}}\bigg)v_i\otimes v_i^*\Bigg].
\end{align}
For $J\subseteq I$, we let:
\begin{itemize}[itemsep=2pt]
\item $Y_J \cong \mathbb{C}^{|J|}$ denote the set of all points $F(y_1, \dots, y_\ell)$ such that $y_i = 0$ for all $i\notin J$ (so that $Y_I = Y$); and
\item $Y_J^\circ \cong (\mathbb{C}^\times)^{|J|}$ denote the set of all points $F(y_1, \dots, y_\ell)$ such that $y_i\neq 0$ for all $i\in J$ and $y_i = 0$ for all $i\notin J$.
\end{itemize}
(We remark that in \cite{evens_jones08}, $Y_J$ is denoted by $Z_{I\setminus J}$.) In particular, we have
\begin{align}\label{YJcirc_decomposition}
Y_J = \bigsqcup_{J'\subseteq J}Y_{J'}^\circ.
\end{align}

Note that $z_J^\circ = F(e_J)$, where $e_J\in\mathbb{C}^\ell$ is the unique point such that $(e_J)_i = 1$ for all $i\in J$ and $(e_J)_i = 0$ for all $i\notin J$. Also, the $Y_J^\circ$'s (for $J\subseteq I$) are precisely the $T\times T$ (or $T\times\{e\}$) orbits of $Y$; explicitly, $Y_J^\circ = (T,e)\cdot z_J^\circ$. We also observe that $Y_J$ is an affine open subset of the toric variety $\overline{Z_J}\cap\overline{T}$.

\begin{lemma}\label{Taff_intersection}
Let $J\subseteq I$.
\begin{enumerate}[label=(\roman*), leftmargin=*, itemsep=2pt]
\item\label{Taff_intersection_closed} The intersection $Y_J \cap \overline{T_{>0}}\cong \mathbb{R}_{\ge 0}^{|J|}$ consists of all points $F(y_1, \dots, y_\ell)$ such that $y_i\ge 0$ for all $i\in J$ and $y_i = 0$ for all $i\notin J$.
\item\label{Taff_intersection_open} We have $(T_{>0},e)\cdot z_J^\circ = Y_J^\circ \cap \overline{T_{>0}} \cong \mathbb{R}_{>0}^{|J|}$; both sets consist of all points $F(y_1, \dots, y_\ell)$ such that $y_i > 0$ for all $i\in J$ and $y_i = 0$ for all $i\notin J$.
\end{enumerate}
\end{lemma}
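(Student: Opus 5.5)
The plan is to compute everything in the coordinates $F$ of \eqref{torus_lambda_isomorphism}, proving \ref{Taff_intersection_open} first and then deducing \ref{Taff_intersection_closed} by taking closures.

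\textbf{The torus action in coordinates.} For $s\in T$, the action of $(s,e)$ on $F(y_1,\dots,y_\ell)$ multiplies the coefficient of $v_i\otimes v_i^*$ by $\mu_i(s)$. After renormalizing the coefficient of $v_0\otimes v_0^*$, this coefficient becomes $y_i\,\mu_i(s)/\lambda(s)$. For $i\in[\ell]$ this is $y_i\alpha_i(s)^{-1}$. For $i>\ell$ it is $\prod_{i'}(y_{i'}\alpha_{i'}(s)^{-1})^{c_{i',i}}$, by \eqref{weight_basis_expansion}. So, in these coordinates, $(s,e)$ acts by $y_i\mapsto \alpha_i(s)^{-1}y_i$.

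\textbf{Part \ref{Taff_intersection_open}.} Start from $z_J^\circ=F(e_J)$. The orbit $(T_{>0},e)\cdot z_J^\circ$ is then the set of $F(y)$ with $y_i=\alpha_i(s)^{-1}$ for $i\in J$ and $y_i=0$ for $i\notin J$, where $s$ ranges over $T_{>0}$. Since $G$ is adjoint, the simple roots form a basis of $X(T)$. Hence $s\mapsto(\alpha_i(s))_{i\in I}$ maps $T_{>0}$ onto $\mathbb{R}_{>0}^\ell$: $T_{>0}$ is the identity component of the real points of $T\cong(\mathbb{C}^\times)^\ell$, and in the $\alpha$-coordinates it is exactly $\mathbb{R}_{>0}^\ell$. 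So the orbit is the set of $F(y)$ with $y_i>0$ for $i\in J$ and $y_i=0$ for $i\notin J$.

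It remains to show this set equals $Y_J^\circ\cap\overline{T_{>0}}$.
\begin{itemize}
\item For $\subseteq$: the orbit lies in $Y_J^\circ$, and it lies in $\overline{T_{>0}}$. For the latter, $T_{>0}=F(\mathbb{R}_{>0}^\ell)$, and its closure inside $Y$ is $F(\mathbb{R}_{\ge0}^\ell)$ because $F$ is a homeomorphism onto the closed set $Y$.
\item For $\supseteq$: a point of $\overline{T_{>0}}$ lying in $Y$ lies in the closure of $T_{>0}$ within $Y$, since $Y$ is open in $\overline{G}$. So it is $F(y)$ with all $y_i\ge0$. If it is also in $Y_J^\circ$, then $y_i\neq0$, hence $y_i>0$, for $i\in J$, and $y_i=0$ otherwise.
\end{itemize}
One point needs care: $Y$ is open in $\overline{G}$ because $Y=\overline{G}\cap\mathbb{P}(\End(V_\lambda))_0\cap\{\text{diagonal}\}$, and more precisely because $\overline{T}\cap\mathbb{P}(\End(V_\lambda))_0=Y$. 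Given that, Euclidean closures can be computed inside the chart.

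\textbf{Part \ref{Taff_intersection_closed}.} The same argument gives $Y\cap\overline{T_{>0}}=F(\mathbb{R}_{\ge0}^\ell)$. Intersecting with $Y_J$ imposes $y_i=0$ for $i\notin J$. This gives $\mathbb{R}_{\ge0}^{|J|}$ via $F$, and it is consistent with \eqref{YJcirc_decomposition} as the union of the sets in \ref{Taff_intersection_open} over $J'\subseteq J$.

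\textbf{Main obstacle.} The main step is identifying $T_{>0}$ with $\mathbb{R}_{>0}^\ell$ in the coordinates $y_i=\alpha_i(s)^{-1}$. This requires $T_{>0}$, which is generated by the coroot images $\alpha_i^\vee(\mathbb{R}_{>0})$, to surject onto $\mathbb{R}_{>0}^\ell$ under $(\alpha_i)$. It does: on the Lie algebra, the map sends $\sum t_i h_i$ to $\bigl(\sum_j a_{ji}t_j\bigr)_i$, where $(a_{ji})$ is the Cartan matrix. The Cartan matrix is invertible, so this linear map is onto $\mathbb{R}^\ell$, and exponentiating gives the surjection.
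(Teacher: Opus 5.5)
Your proposal is correct and follows the paper's approach. The paper's proof is a one-line appeal to the definitions together with the fact $Y\cap\overline{T_{>0}} = F(\mathbb{R}_{\ge 0}^\ell)$, which you verify by writing the $(T,e)$-action in the $F$-coordinates and identifying $T_{>0}$ with $\mathbb{R}_{>0}^\ell$ via the simple roots.

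One small correction: $Y$ is not open in $\overline{G}$, since it has dimension $\ell$. What your argument actually uses, and what suffices, is that the chart $\mathbb{P}(\End(V_\lambda))_0$ is open and that $Y = F(\mathbb{C}^\ell)$ is closed in it.
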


\begin{proof}
This follows from the definitions and the properties of $Y_J$ above, using the fact that $Y \cap \overline{T_{>0}}$ is the image under $F$ of $\mathbb{R}_{\ge 0}^\ell$.
\end{proof}

\subsection{Proof of \texorpdfstring{\cref{main}}{the main theorem}}\label{sec:flow_proof}
For the remainder of this subsection, fix $J\subseteq I$, and set $K := \psi(\overline{Z_J^{\ge 0}}) \subseteq \mathbb{R}^N$, which is well-defined and homeomorphic to $\overline{Z_J^{\ge 0}}$ by \cref{positive_nonzero_coefficient}. Set
\[
K' := \bigcup_{t>0}f(t,K),
\]
as in \eqref{invariant}. Our goal is to apply \cref{flow_homeomorphism}.

\begin{lemma}\label{Kprime_description}
We have
\[
\psi^{-1}(K') = \bigcup_{t>0}(\exp(tx), \exp(tx)^{-1})\cdot \overline{Z_J^{\ge 0}} = \bigsqcup_{J'\subseteq J}Z_{J'}^{>0}.
\]
\end{lemma}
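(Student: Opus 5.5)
The plan is to check the two equalities separately; both are formal consequences of results already established.

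For the first equality, we unwind the definitions. By \cref{flow_x}, for every $t$ and every $p\in\mathbb{R}^N$ we have
\[
\psi^{-1}(f(t,p)) = (\exp(tx),\exp(tx)^{-1})\cdot\psi^{-1}(p).
\]
We have $K=\psi(\overline{Z_J^{\ge 0}})$, and $\psi$ is a bijection from $(g_x,g_x)\cdot\mathbb{P}(\End(V_\lambda))_0$ onto $\mathbb{R}^N$. This domain contains $\overline{Z_J^{\ge 0}}$ by \cref{positive_nonzero_coefficient}. Therefore
\[
\psi^{-1}(f(t,K))=(\exp(tx),\exp(tx)^{-1})\cdot\overline{Z_J^{\ge 0}}.
\]
Taking the union over $t>0$ gives the first equality.

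For the second equality, we first use \cref{stratum_closure_description} to write $\overline{Z_J^{\ge 0}}=\bigsqcup_{J'\subseteq J}Z_{J'}^{\ge 0}$. The action of $G\times G$ distributes over this disjoint union, so the middle expression equals
\[
\bigcup_{J'\subseteq J}\ \bigcup_{t>0}(\exp(tx),\exp(tx)^{-1})\cdot Z_{J'}^{\ge 0}.
\]
By \cref{Kprime_description_lemma} applied to each $J'$, the inner union is exactly $Z_{J'}^{>0}$. It remains to note that the union over $J'$ is disjoint. This holds because $Z_{J'}^{>0}\subseteq Z_{J'}$, and the $G\times G$-orbits $Z_{J'}$ are pairwise disjoint.

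There is no genuine obstacle; the proof is bookkeeping. The only points to watch are two. First, $\psi$ must be defined on all of $\overline{Z_J^{\ge 0}}$, which is supplied by \cref{positive_nonzero_coefficient}. Second, each translate $(\exp(tx),\exp(tx)^{-1})\cdot Z_{J'}^{\ge 0}$ stays inside the $G\times G$-orbit $Z_{J'}$, which is automatic from orbit invariance.
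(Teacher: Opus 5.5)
Your proof is correct and follows the paper's route. The first equality comes from \cref{flow_x}, with \cref{positive_nonzero_coefficient} ensuring that $\psi$ is defined on all of $\overline{Z_J^{\ge 0}}$; the second comes from combining \cref{stratum_closure_description} with \cref{Kprime_description_lemma} applied to each $J'\subseteq J$, and your disjointness argument via $Z_{J'}^{>0}\subseteq Z_{J'}$ just spells out a detail the paper leaves implicit.
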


\begin{proof}
The first equality follows from \cref{flow_x}. The second equality follows by combining \cref{stratum_closure_description,Kprime_description_lemma}.
\end{proof}

\begin{lemma}\label{Kprime_relatively_open}
The set $K'$ is a relatively open subset of $K$.
\end{lemma}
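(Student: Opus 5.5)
By \cref{Kprime_description}, after transporting by the homeomorphism $\psi$, it suffices to show that $\bigsqcup_{J'\subseteq J}Z_{J'}^{>0}$ is relatively open in $\overline{Z_J^{\ge 0}} = \bigsqcup_{J'\subseteq J}Z_{J'}^{\ge 0}$. Equivalently, we must show that its complement $\bigsqcup_{J'\subseteq J}(Z_{J'}^{\ge 0}\setminus Z_{J'}^{>0})$ is closed in $\overline{Z_J^{\ge 0}}$. We argue pointwise: given $A\in Z_{J'}^{>0}$ with $J'\subseteq J$, we will find a neighborhood $\mathcal{U}$ of $A$ in $\overline{G}$ such that $\mathcal{U}\cap\overline{Z_J^{\ge 0}}\subseteq\bigsqcup_{J''\subseteq J}Z_{J''}^{>0}$.

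First, by \cref{he_ZJ}\ref{he_ZJ_paramtrization}, write $A = (u_-h,u_+^{-1})\cdot z_{J'}^\circ$. Since $U_\pm^{>0}$ are open in $U_\pm(\mathbb{R})$, we can choose $g_1 := u_-h\,a^{-1}$ and $g_2 := b$, with $a\in U_-^{>0}$, $b\in U_+^{>0}$ small and $g_1,g_2$ still totally positive in the appropriate factors. In this way we write $A=(g_1,g_2^{-1})\cdot A'$, where $A'=(a,b^{-1})\cdot z_{J'}^\circ$ lies in $Z_{J'}^{\ge 0}$ and is close to $z_{J'}^\circ$. A cleaner route is to use the $\exp(tx)$ flow itself. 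By \cref{Kprime_description_lemma}, $A=(\exp(tx),\exp(tx)^{-1})\cdot A'$ with $A'\in Z_{J'}^{\ge 0}$ and $t>0$. The map $\Phi_t:=(\exp(tx),\exp(tx)^{-1})$ is a homeomorphism of $\overline{G}$ which preserves $\overline{Z_J}$ and $\overline{G_{\ge 0}}$, so it maps $\overline{Z_J^{\ge 0}}$ into itself. It therefore suffices to show that the image $\Phi_t(\overline{Z_J^{\ge 0}})$ contains a relative neighborhood of $A$ in $\overline{Z_J^{\ge 0}}$; by \cref{he_ZJ}\ref{he_ZJ_containment} and \cref{stratum_closure_description}, this image lies in $\bigsqcup_{J''\subseteq J} Z_{J''}^{>0}$. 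Equivalently, we need $\Phi_t^{-1}$ to map points of $\overline{Z_J^{\ge 0}}$ near $A$ back into $\overline{Z_J^{\ge 0}}$ for small $t$. This reduces to proving that $\bigsqcup_{J''\subseteq J}Z_{J''}^{>0}$ is relatively open, so the flow only relocates the problem and we still need a local chart.

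The key step is therefore a local model of $\overline{Z_J^{\ge 0}}$ near $z_{J'}^\circ$. I would use the big-cell chart $U_-\times Y\times U_+\to\overline{G}$, $(u,y,v)\mapsto(u,v^{-1})\cdot y$, which is an open embedding onto a neighborhood of $Y$ by \cite{de_concini_procesi83,evens_jones08}. In this chart, $\overline{Z_J}$ corresponds to $U_-\times Y_J\times U_+$. I would then show that the totally nonnegative points near $(u_-,F(y),u_+)$ with $u_\pm$ totally positive are exactly those with $y\in Y_J\cap\overline{T_{>0}}\cong\mathbb{R}_{\ge0}^{|J|}$ (by \cref{Taff_intersection}) and $u_\pm$ in $U_\pm^{>0}$. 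Such points are given by \cref{he_ZJ}\ref{he_ZJ_paramtrization} stratum by stratum, using $(u_-h,u_+^{-1})\cdot z_{J''}^\circ=(u_-,u_+^{-1})\cdot F(y)$ with $y$ having support exactly $J''$. Consequently $\bigsqcup_{J''\subseteq J}Z_{J''}^{>0}$ corresponds to $U_-^{>0}\times(Y_J\cap\overline{T_{>0}})\times U_+^{>0}$. This set is relatively open in $\overline{Z_J^{\ge 0}}$ because $U_\pm^{>0}$ is open in $U_\pm(\mathbb{R})$ and the chart is open.

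The main obstacle is the remaining inclusion: a point of $\overline{Z_J^{\ge 0}}$ that lies in the chart near $A$ must have totally positive $U_\pm$-coordinates, and not merely real ones. I would handle this by noting that $\overline{Z_J^{\ge 0}}\cap\text{chart}$ is the closure of $Z_J^{>0}$, whose points are of the form $(u_-,u_+^{-1})\cdot F(y)$ with $u_\pm\in U_\pm^{>0}$ and $y\in\mathbb{R}_{>0}^{|J|}$. Uniqueness of chart coordinates then implies that limits have coordinates in $\overline{U_-^{>0}}\times\mathbb{R}_{\ge0}^{|J|}\times\overline{U_+^{>0}}$. Near $A$, the $U_\pm$-coordinates are close to $u_\pm\in U_\pm^{>0}$, which lies in the open set $U_\pm^{>0}$, so they are totally positive. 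This completes the argument.
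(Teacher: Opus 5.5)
Your final argument is correct and is essentially the paper's proof via \eqref{Tbar_reduction}. It uses the De Concini--Procesi chart $U_-\times Y\times U_+$, He's parametrization identifying $\bigsqcup_{J''\subseteq J}Z_{J''}^{>0}$ with $U_-^{>0}\times(Y_J\cap\overline{T_{>0}})\times U_+^{>0}$, and openness of $U_\pm^{>0}$ in the real points, and your closure step (nearby points of $\overline{Z_J^{\ge 0}}$ have chart coordinates in $\overline{U_-^{>0}}\times(Y_J\cap\overline{T_{>0}})\times\overline{U_+^{>0}}$) makes explicit what the paper's ``it suffices'' leaves implicit; the abandoned first attempt and the $\exp(tx)$ detour in your second paragraph are not needed and should be cut.
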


\begin{proof}
By \cref{Kprime_description}, we must equivalently show that $\bigsqcup_{J'\subseteq J}Z_{J'}^{>0}$ is relatively open in $\overline{Z_J^{\ge 0}}$ (it is a subset of $\overline{Z_J^{\ge 0}}$ by \cref{stratum_closure_description}). By \cref{he_ZJ}\ref{he_ZJ_paramtrization}, we have
\begin{align}\label{Tbar_reduction}
\bigsqcup_{J'\subseteq J}Z_{J'}^{>0} \cong U_-^{>0} \times \bigsqcup_{J'\subseteq J}(T_{>0},e)\cdot z_{J'}^\circ \times U_+^{>0}.
\end{align}
Since $U_-^{>0}$ is open in the real part of $U_-$ (and similarly for $U_+$), it suffices to show that $\bigsqcup_{J'\subseteq J}(T_{>0},e)\cdot z_{J'}^\circ$ is relatively open in $\overline{Z_J} \cap \overline{T_{>0}}$. By \cref{Taff_intersection} and \eqref{YJcirc_decomposition}, we have
\[
\bigsqcup_{J'\subseteq J}(T_{>0},e)\cdot z_{J'}^\circ = \bigsqcup_{J'\subseteq J}Y_{J'}^\circ\cap\overline{T_{>0}} = Y_J\cap\overline{T_{>0}} \cong \mathbb{R}_{\ge 0}^{|J|}.
\]
This is relatively open in $\overline{Z_J} \cap \overline{T_{>0}}$ since it equals the intersection of $\overline{Z_J} \cap \overline{T_{>0}}$ with the open subset $Y_J$.
\end{proof}

\begin{remark}
We explain an alternative way to prove \cref{Kprime_relatively_open}, which involves more machinery but is perhaps more intuitive (we omit the technical details). The intersection $\overline{Z_J}\cap\overline{T}$ is a toric variety whose nonnegative part is $\overline{Z_J}\cap\overline{T_{>0}}$. It is a general theorem of toric varieties (see \cite[Section 4.2]{fulton93} or \cite[Section 8]{sottile03a}) that the nonnegative part of a toric variety is homeomorphic to its moment polytope $P$ via the algebraic moment map $\mu$. In our case, $P$ is the \emph{$W$-permutohedron}, i.e., the convex hull of $W\cdot\lambda$ in $\mathfrak{t}^*$. The $T$-fixed point $z_\varnothing^\circ$ corresponds to a vertex $\mu(z_\varnothing^\circ)$ of $P$, and $\mu(Y_J\cap\overline{T_{>0}})$ is the star of this vertex, i.e., the union of all open faces whose closure contains $z_\varnothing^\circ$. This is a relatively open subset of $P$ since it is obtained from $P$ by deleting all closed facets which do not contain $z_\varnothing^\circ$, thereby proving \cref{Kprime_relatively_open}.
\end{remark}
\begin{figure}[tb]
\begin{center}
\[
\begin{tikzpicture}[baseline=(current bounding box.center)]
\tikzstyle{vertexu}=[inner sep=0,minimum size=1.8mm,circle,draw=gray,fill=gray!30,semithick]
\tikzstyle{vertexs}=[inner sep=0,minimum size=1.8mm,circle,draw=black,fill=teal!80,semithick]
\pgfmathsetmacro{\r}{1.68};
\pgfmathsetmacro{\s}{1.0};
\pgfmathsetmacro{\hs}{0.80};
\pgfmathsetmacro{\vs}{0.40};
\useasboundingbox(0,0)circle(\r+0.24);
\fill[color=black!20!teal!50](-90:\r)--(-30:\r)--(30:\r)--(90:\r)--(150:\r)--(210:\r)--cycle;
\draw[very thick,color=black!20!teal!80](90:\r)--(150:\r) (90:\r)--(30:\r);
\draw[very thick, color=gray!30](-210:\r)--(-150:\r)--(-90:\r)--(-30:\r)--(30:\r);
\node[vertexu](123)at(-90:\r){};
\node[vertexu](132)at(-30:\r){};
\node[vertexu](231)at(30:\r){};
\node[vertexs](321)at(90:\r){};
\node[vertexu](312)at(150:\r){};
\node[vertexu](213)at(210:\r){};
\node[inner sep=0]at($(321)+(0,\vs)$){\scalebox{\s}{$\mu(z_\varnothing^\circ)$}};
\end{tikzpicture}\vspace*{-8pt}
\]
\caption{The moment polytope $P$ of $\overline{T}$ for $G = \PSL_3(\mathbb{C})$, which is the permutohedron of the symmetric group $W = \symgp{3}$. The star of the vertex $\mu(z_\varnothing^\circ)$ is shaded; we see it is relatively open in $P$.}
\label{fig:hexagon}
\end{center}
\end{figure}

We are now ready to apply \cref{flow_homeomorphism} to show that $\overline{Z_J^{\ge 0}}$ is homeomorphic to a closed ball. By \cref{Gbar_contractive_flow}, $f$ is a contractive flow on $\mathbb{R}^N$, and \eqref{invariant} holds by \cref{Kprime_relatively_open}. It remains to show that there exists $r > 0$ such that $B_r\cap K$ is contained in $K'$ and is homeomorphic to a closed ball. Applying $\psi^{-1}$ (using \cref{Kprime_description}), it suffices to show that there exists $r > 0$ such that $\psi^{-1}(B_r)\cap\overline{Z_J^{\ge 0}}$ is contained in $\bigsqcup_{J'\subseteq J}Z_{J'}^{>0}$ and is homeomorphic to a closed ball.

We again use \eqref{Tbar_reduction} and proceed as in the proof of \cref{Kprime_relatively_open}. Note that $\psi^{-1}(0) = A_\att = (g_x,g_x) \cdot z_\varnothing^\circ$ (which is an element of \eqref{Tbar_reduction}), and $U_-^{>0}$ and $U_+^{>0}$ are both homeomorphic to the open ball $\mathbb{R}_{>0}^d$, where $d$ is the length of $w_0$. Hence it suffices to show that there exists $r > 0$ such that $\psi^{-1}(B_r) \cap \overline{Z_J} \cap \overline{T_{>0}}$ is contained in $Y_J\cap\overline{T_{>0}}$ and is homeomorphic to a closed ball. This follows from the fact that $Y_J \cap \overline{T_{>0}}$ is relatively open in $\overline{Z_J} \cap \overline{T_{>0}}$ and homeomorphic to $\mathbb{R}_{\ge 0}^{|J|}$; intersecting with $\psi^{-1}(B_r)$ is equivalent in $\mathbb{R}_{\ge 0}^{|J|}$ to taking the $r$-sublevel set (i.e.\ the preimage of $[0,r]$) of a proper continuous nonnegative semialgebraic function on $\mathbb{R}^{|J|}$ whose preimage of $0$ is the origin. It is a general theorem of semialgebraic geometry (see \cite[proof of Theorem 9.3.6]{bochnak_coste_roy98} and \cite[Proposition 1]{coste_kurdyka92}) that the topology of such a sublevel set does not depend on the choice of such a function (for all $r > 0$ sufficiently small); therefore such a sublevel set is homeomorphic to a closed ball for all $r > 0$ sufficiently small. This completes the proof of \cref{main}.\hfill\qed

\begin{remark}
The \define{complete flag variety} of $G$ is the homogeneous space $G/B_+$. Its totally nonnegative part $(G/B_+)_{\ge 0}$ (introduced by Lusztig \cite[Section 8]{lusztig94}) is the Euclidean closure of the image of $U_-^{>0}$. Alternatively, by applying the map $\transpose{\cdot}$, we may regard the complete flag variety to be $G/B_-$ and its totally nonnegative part (denoted $(G/B_-)_{\ge 0}$) to be the Euclidean closure of the image of $U_+^{>0}$. It was shown in \cite{galashin_karp_lam19} that $(G/B_+)_{\ge 0}$ is homeomorphic to a closed ball using a contractive flow, and more generally in \cite{galashin_karp_lam22} that $(G/B_+)_{\ge 0}$ is a regular CW complex (using entirely different methods; see \cref{he_xie_remark}).

When $J = \varnothing$, the stratum $Z_J$ is isomorphic to $G/B_+ \times G/B_-$ \cite[Proposition 2.25]{evens_jones08}. Its totally nonnegative part is $Z_\varnothing^{\ge 0} = \overline{Z_\varnothing^{\ge 0}}$ (the equality follows from \cref{stratum_closure_description}), which is homeomorphic to the product $(G/B_+)_{\ge 0} \times (G/B_-)_{\ge 0}$ of two totally nonnegative flag varieties. (This follows from the definitions using the fact that $Z_\varnothing^{\ge 0}$ is the Euclidean closure of $Z_\varnothing^{>0}$, and $Z_\varnothing^{>0}\cong U_-^{>0}\times U_+^{>0}$ by \cref{he_ZJ}\ref{he_ZJ_paramtrization}.) In particular, \cref{main} recovers the fact that $(G/B_+)_{\ge 0} \times (G/B_-)_{\ge 0}$ is homeomorphic to a closed ball.
\end{remark}

\subsection{Example: \texorpdfstring{$\PSL_2(\mathbb{C})$}{PSL(2)}}\label{sec:PSL(2)}
\begin{figure}[tb]
\begin{center}~\vspace*{-12pt}
\[
\includegraphics[scale=0.35]{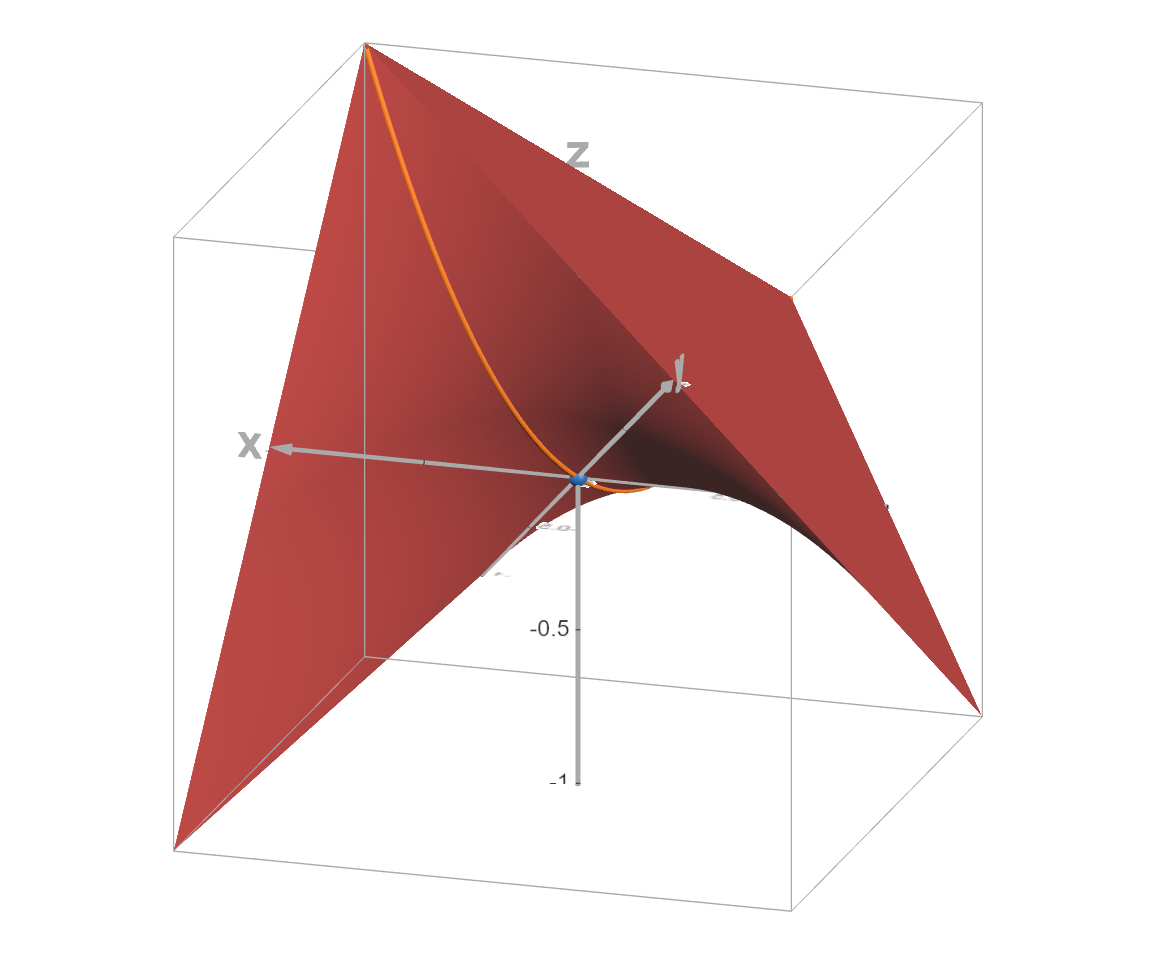}\vspace*{-18pt}
\]
\caption{A plot of $\overline{G_{\ge 0}}$ for $G = \PSL_2(\mathbb{C})$ obtained using \href{https://www.desmos.com/}{\texttt{Desmos}}. The attractor for the contractive flow is the origin (depicted in blue), with two trajectories depicted in orange.}
\label{fig:PSL(2)}
\end{center}
\end{figure}
We illustrate \cref{main} and its proof in the case of $G = \PSL_2(\mathbb{C})$. Recall from \cref{eg:pinning} that $G$ is the space of $2\times 2$ matrices with determinant $1$ modulo rescaling by $\pm 1$. Let $V = \mathbb{C}^2$, so that we can identify $\End(V)$ with the space of $2\times 2$ matrices. Then we can realize $\overline{G}$ in $\mathbb{P}(\End(V))$ as the space of nonzero $2\times 2$ matrices modulo rescaling:
\[
\overline{G} = \left\{[A] : A = \begin{pmatrix}a & b \\ c & d\end{pmatrix} \text{ such that } a,b,c,d \text{ not all } 0\right\}.
\]
(Technically $V$ is not a representation of $G$ as in \cref{sec:Gbar}, but rather of the simply connected cover $\SL_2(\mathbb{C})$. We still obtain a valid realization of the wonderful compactification of $\PSL_2(\mathbb{C})$, and it is the most convenient one for doing calculations.)

The $G\times G$-action on $\overline{G}$ is given by $(g_1,g_2)\cdot [A] = [g_1Ag_2^{-1}]$. We calculate that
\[
\overline{G} = Z_I \sqcup Z_\varnothing,
\]
where $Z_I = G$ and $Z_\varnothing$ is the group of $2\times 2$ matrices of rank $1$ modulo rescaling. We also have
\[
\overline{G_{\ge 0}} = \left\{[A] : A = \begin{pmatrix}a & b \\ c & d\end{pmatrix}\neq 0 \text{ such that } a,b,c,d,ad-bc \ge 0\right\},
\]
and $\overline{Z_\varnothing^{\ge 0}}$ is the subset of matrix classes of rank $1$.

The contractive flow on $\overline{G_{\ge 0}}$ is given by the action of $(\exp(tx), \exp(tx)^{-1})\in G\times G$, where
\[
x = e_1 + f_1 = \begin{pmatrix}
0 & 1 \\
1 & 0
\end{pmatrix} \quad \text{ and } \quad \exp(tx) = \begin{pmatrix}
\cosh(t) & \sinh(t) \\
\sinh(t) & \cosh(t)
\end{pmatrix}.
\]
Explicitly, $(\exp(tx), \exp(tx)^{-1}) \cdot [A] = [\exp(tx)A\exp(tx)]$. However, it is more convenient to work in coordinates $(\alpha,\beta,\gamma)\in\mathbb{R}^3$ coming from \cref{flow_x}. Define the eigenvectors
\begin{align}\label{PSL(2)_eigenvectors}
w_0 := \begin{pmatrix}1 \\ 1\end{pmatrix}\!,\, w_1 := \begin{pmatrix}1 \\ -1\end{pmatrix}\!, \quad \text{ so that } \quad \exp(tx)w_0 = e^tw_0,\, \exp(tx)w_1 = e^{-t}w_1.
\end{align}
Note that the $4$ matrices $w_i\transposeminus{w_j}$ (for $i,j\in\{0,1\}$) form a basis for the space of $2\times 2$ matrices; we identify $\mathbb{R}^3$ with the space of real matrices (modulo rescaling) $[A]$ whose coefficient of $w_0\transposeminus{w_0}$ is nonzero:
\begin{align*}
A &= \begin{pmatrix}1 & 1 \\ 1 & 1\end{pmatrix} + \alpha\begin{pmatrix}1 & -1 \\ 1 & -1\end{pmatrix} + \beta\begin{pmatrix}1 & 1 \\ -1 & -1\end{pmatrix} + \gamma\begin{pmatrix}1 & -1 \\ -1 & 1\end{pmatrix} \\
&= \begin{pmatrix}
1 + \alpha + \beta + \gamma & 1 - \alpha + \beta - \gamma \\
1 + \alpha - \beta - \gamma & 1 - \alpha - \beta + \gamma
\end{pmatrix}.
\end{align*}
Equivalently, this is the space of real $[A]$ such that the sum of the entries of $A$ is nonzero, which in particular includes all of $\overline{G_{\ge 0}}$ (in agreement with \cref{positive_nonzero_coefficient}). This embeds $\overline{G_{\ge 0}}$ inside $\mathbb{R}^3$ as the subset
\begin{align}\label{R3_inequalities}
1 + \alpha + \beta + \gamma, 1 - \alpha + \beta - \gamma, 1 + \alpha - \beta - \gamma, 1 - \alpha - \beta + \gamma, 4\gamma - 4\alpha\beta \ge 0,
\end{align}
as depicted in \cref{fig:PSL(2)}. We see that there are three $2$-faces, five edges, and four vertices (the cell decomposition is described in \cite[Section 6.2]{he_xie}).

Using \eqref{PSL(2)_eigenvectors}, we calculate how the contractive flow $(\exp(tx), \exp(tx)^{-1}) \cdot [A]$ acts on $\mathbb{R}^3$:
\[
f(t) \cdot (\alpha,\beta,\gamma) = (e^{-2t}\alpha, e^{-2t}\beta, e^{-4t}\gamma).
\]
In particular, $f(t)\cdot(\alpha,\beta,\gamma)$ converges as $t\to\infty$ to $(0,0,0)$, which corresponds to the matrix class
\[
\left[\begin{pmatrix}
1 & 1 \\
1 & 1
\end{pmatrix}\right] = z_\varnothing^\circ.
\]
In \cref{fig:PSL(2)} we have depicted the trajectories from the vertices \scalebox{0.8}{$\left[\begin{pmatrix}4 & 0 \\ 0 & 0\end{pmatrix}\right]$} and \scalebox{0.8}{$\left[\begin{pmatrix}0 & 0 \\ 0 & 4\end{pmatrix}\right]$} to the attractor. We note that $\overline{Z_\varnothing^{\ge 0}}$ is obtained from \eqref{R3_inequalities} by imposing the additional constraint $4\gamma - 4\alpha\beta = 0$, corresponding to the (curved) quadrilateral face in \cref{fig:PSL(2)}.

\bibliographystyle{alpha}
\bibliography{ref}

\end{document}